\newtheorem{theorem}{Theorem}[section]
\newtheorem{lemma}[theorem]{Lemma}
\newtheorem{proposition}[theorem]{Proposition}
\newtheorem{corollary}[theorem]{Corollary}
\theoremstyle{definition}
\newtheorem{definition}[theorem]{Definition}
\newtheorem{example}[theorem]{Example}
\newtheorem{remark}[theorem]{Remark}
\newcommand{\GL} {\mathrm{GL}}
\newcommand{\SL} {\mathrm{SL}}
\newcommand{\OO} {\mathrm{O}}
\newcommand{\SO} {\mathrm{SO}}
\newcommand{\Sp} {\mathrm{Sp}}
\def\CC {{\mathbb C}}     
\def\NN {{\mathbb N}}     
\def\ZZ {{\mathbb Z}}     
\begin{document}

\title[Symplectic and orthogonal invariants]
{Invariants of symplectic and orthogonal groups acting on $\GL(n,\CC)$-modules}

\author[Vesselin Drensky and Elitza Hristova]
{Vesselin Drensky and Elitza Hristova}
\address{Institute of Mathematics and Informatics,
Bulgarian Academy of Sciences,
Acad. G. Bonchev Str., Block 8,
1113 Sofia, Bulgaria}
\email{drensky@math.bas.bg, e.hristova@math.bas.bg}

\thanks{Partially supported by Project DFNP-86/04.05.2016
of the Young Researchers Program of the Bulgarian Academy of Sciences and
by Grant KP-06 N 32/1 of 07.12.2019 of the Bulgarian National Science Fund}

\subjclass[2010]{13A50; 05E05; 15A72; 15A75.}

\keywords{invariant theory, Hilbert series, Schur function.}

\begin{abstract}
Let $\GL(n) = \GL(n, \CC)$ denote the complex general linear group and let $G \subset \GL(n)$
be one of the classical complex subgroups $\OO(n)$, $\SO(n)$, and $\Sp(2k)$ (in the case $n = 2k$).
We take a finite dimensional polynomial $\GL(n)$-module $W$ and consider the symmetric algebra $S(W)$.
Extending previous results for $G=\SL(n)$, we develop a method for determining the Hilbert series $H(S(W)^G, t)$ of the algebra of invariants $S(W)^G$.
Then we give explicit examples for computing $H(S(W)^G, t)$.
As a further application, we extend our method to compute also the Hilbert series of the algebras of invariants
$\Lambda(S^2 V)^G$ and $\Lambda(\Lambda^2 V)^G$, where $V = \CC^n$ denotes the standard $\GL(n)$-module.
\end{abstract}

\maketitle

\section{Introduction} \label{sec_Intro}

Let $\GL(n) = \GL(n, \CC)$ be the general linear group with its canonical action on the $n$-dimensional complex vector space $V=\CC^n$ and
let $W$ be a finite dimensional polynomial $\GL(n)$-module. Then $W$ can be written as a direct sum of its irreducible components
\begin{equation} \label{decomposition of W}
W = \bigoplus_{\lambda} k(\lambda) V^n_{\lambda},
\end{equation}
where $\lambda = (\lambda_1, \ldots, \lambda_n) \in \NN_0^n$, $\lambda_1\geq \lambda_2 \geq \cdots \geq \lambda_n \geq 0$,
is a non-negative integer partition and
$V^n_{\lambda}$ is the irreducible $\GL(n)$-module with highest weight $\lambda$.
(In particular, $V=V^n_{(1)}$.) We consider the symmetric algebra
\[
S(W) = \bigoplus_{i \geq 0} S^iW,
\]
where $S^iW$ denotes the $i$-th symmetric power of $W$. Then $\GL(n)$ and its subgroups act canonically on $S(W)$ by the usual diagonal action
and we can construct the algebra of invariants $S(W)^G$, where $G$ is a subgroup of $\GL(n)$.
In classical invariant theory one considers also the algebra of polynomial functions $\CC[W]$.
The group $\GL(n)$ and its subgroups $G$ act canonically on $\CC[W]$ by the formula
\[
(gf)(v) = f(g^{-1}v)\text{ for all }v \in W\text{ and }f \in \CC[W].
\]
One uses this action and studies the algebras of invariants $\CC[W]^G$,
where again $G$ is a subgroup of $\GL(n)$. 


We recall the following definition.
\begin{definition}
Let $\displaystyle A = \bigoplus_{i\geq 0} A^i$ be a finitely generated graded (commutative or non-commutative) algebra over $\CC$
such that $A^0=\CC$ or $A^0=0$.
The {\bf Hilbert series} of $A$ is the formal power series
\[
H(A, t) = \sum_{i\geq 0} (\dim A^i) t^i.
\]
\end{definition}
The Hilbert series $H(A,t)$ is one of the most important invariants of the graded algebra.
In particular, when we consider a minimal set of generators of $A$, $H(A,t)$ gives information about the lowest degree of the generators in this set and the maximal number of generators in each degree.

Both algebras $\CC[W]^G$ and $S(W)^G$ have a natural $\NN_0$-grading which is inherited, respectively,
from the $\NN_0$-gradings of $\CC[W]$ and $S(W)$.
Furthermore, $\CC[W]^G$ and $S(W)^G$ for $G=\OO(n)$, $\SO(n)$, $\Sp(2k)$ are isomorphic as $\NN_0$-graded algebras and hence
$H(\CC[W]^G, t) = H(S(W)^G, t)$. In the sequel we shall work in $S(W)$. 

There are many methods to compute the Hilbert series $H(\CC[W]^G, t)$ (see, e.g., \cite{DK}).
In a series of joint papers of the first named author (see \cite{BBDGK} for an account), one more method
for computing the Hilbert series $H(S(W)^{\SL(n)}, t)$
of the algebra of invariants $S(W)^{\SL(n)}$ has been developed. It is based on the method of Elliott \cite{E} from 1903
for finding the non-negative solutions of linear systems of homogeneous Diophantine equations, further developed by MacMahon \cite{MM}
in his $\Omega$-calculus (or Partition Analysis), and combined with the approach of Berele \cite{B} in the study of cocharacters
of algebras with polynomial identities.
Our goal in this paper is to extend the latter method and to determine also the Hilbert series of the algebras of invariants
$S(W)^G$ for $G = \OO(n)$, $G = \SO(n)$, or $G = \Sp(2k)$.
Our main results are given in Section \ref{sec_HilbSeries}.

In Section \ref{sec_Examples}, using our results from Section \ref{sec_HilbSeries},
we compute the Hilbert series of $S(W)^G$ for explicit examples of $W$.
Some of the examples are classical and in these examples the algebra of invariants $S(W)^G$ is already described in the literature in terms of generators and relations, but there are several low dimensional examples, which we believe are new. In some examples we also explore the relations between the algebras $S(W)^{\OO(n)}$ and $S(W)^{\SO(n)}$.

In Sections \ref{sec_S} and \ref{sec_Lambda}, as a further application of our method, we compute also the Hilbert series of
the algebras of invariants $\Lambda(S^2 V)^G$ and $\Lambda(\Lambda^2 V)^G$ for $G = \OO(n)$, $\SO(n)$, $\Sp(2k)$,
where $\Lambda(W)$ and $\Lambda^2(W)$ denote, respectively,
the exterior algebra and the second exterior power of the $\GL(n)$-module $W$. 


\section{Decomposition of irreducible $\GL(2k)$-modules over $\Sp(2k)$} \label{secBR_Sp}
In this section $n=2k$.  By $V^{2k}_{\lambda}$ we denote again the irreducible $\GL(2k)$-module with highest weight $\lambda$.
Our goal is to decompose $V^{2k}_{\lambda}$ as a module over $\Sp(2k)$ and to determine the dimension of the subspace of invariants $(V^{2k}_{\lambda})^{\Sp(2k)}$.
The irreducible representations of $\Sp(2k)$ are indexed by non-negative integer partitions $\mu$ with at most $k$ parts,
i.e., $\mu = (\mu_1, \ldots, \mu_k, 0, \ldots, 0)$ (see, e.g., \cite{FH, GW}). We denote them by $V^{2k}_{\left\langle \mu \right\rangle}$.
For a partition $\lambda = (\lambda_1,\dots, \lambda_n)$ we write $\lambda' = (\lambda_1', \dots, \lambda_n')$ for the transpose and by $2\delta = (2\delta_1,\ldots, 2\delta_n)$ we denote an even partition. With these notations the following Littlewood-Richardson branching rule holds.

\begin{proposition}\cite{HTW, K} \label{prop_LRruleSp}
Let $\lambda$ be a partition in at most $k$ parts. Then
\begin{equation}\label{eq_LRruleSp}
{V^{2k}_{\lambda}}{\downarrow \Sp(2k)} \cong \bigoplus_{\mu, 2\delta} c^{\lambda}_{\mu (2\delta)'}V^{2k}_{\left\langle \mu \right\rangle},
\end{equation}
where the sum runs over all partitions $\mu$
and all even partitions $2\delta$.
Here the coefficients 
$c^{\lambda}_{\mu \nu}$ are the Littlewood-Richardson coefficients.
\end{proposition}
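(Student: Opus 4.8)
The plan is to pass from modules to characters and to prove the equivalent symmetric-function identity. Let $x_1^{\pm 1}, \ldots, x_k^{\pm 1}$ be the eigenvalues of a generic element of a maximal torus of $\Sp(2k)$ acting on $\CC^{2k}$. Then the character of $V^{2k}_\lambda{\downarrow}\Sp(2k)$ is the Schur function $s_\lambda$ specialized at $(x_1, x_1^{-1}, \ldots, x_k, x_k^{-1})$, while the character of $V^{2k}_{\langle\mu\rangle}$ is the symplectic Weyl character, which I will write $\mathrm{sp}_\mu(x_1, \ldots, x_k)$ and express through the Weyl character formula as a quotient of alternants. Thus \eqref{eq_LRruleSp} is equivalent to the identity
\[
s_\lambda(x_1, x_1^{-1}, \ldots, x_k, x_k^{-1}) = \sum_{\mu}\Big(\sum_{\delta} c^\lambda_{\mu\,(2\delta)'}\Big)\,\mathrm{sp}_\mu(x_1, \ldots, x_k),
\]
to be established for every $\lambda$ with at most $k$ parts.

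The combinatorial heart of the argument is the Littlewood identity
\[
\sum_{\delta} s_{(2\delta)'}(z_1, z_2, \ldots) = \prod_{i<j}\frac{1}{1 - z_iz_j},
\]
the sum being over all partitions $\delta$, so that $(2\delta)'$ ranges over the partitions all of whose columns have even length. I would prove this first, either by the standard Cauchy--Binet/Pfaffian computation or by a Gessel--Viennot lattice-path argument; the antisymmetry of the symplectic form is precisely what produces the product over $i<j$ (as opposed to $i\le j$, which governs the orthogonal case). Combined with the Littlewood--Richardson rule $s_\mu\, s_{(2\delta)'} = \sum_\lambda c^\lambda_{\mu\,(2\delta)'}\,s_\lambda$, this identity expresses the multiplicities $\sum_\delta c^\lambda_{\mu\,(2\delta)'}$ as exactly the coefficients arising when $\big(\prod_{i<j}(1-z_iz_j)^{-1}\big)s_\mu(z)$ is re-expanded in the Schur basis.

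To convert this into the symplectic branching coefficients I would run a generating-function comparison. Multiplying the target identity by $s_\lambda(y_1, y_2, \ldots)$ and summing over $\lambda$, the Cauchy identity turns the left-hand side into $\prod_{i,j}(1-x_iy_j)^{-1}(1-x_i^{-1}y_j)^{-1}$, while the right-hand side is governed by the symplectic Cauchy identity
\[
\sum_{\mu}\mathrm{sp}_\mu(x)\,s_\mu(y) = \Big(\prod_{i<j}(1 - y_iy_j)\Big)\prod_{i,j}\frac{1}{(1-x_iy_j)(1-x_i^{-1}y_j)}.
\]
Dividing by $\prod_{i<j}(1-y_iy_j)$, substituting the Littlewood identity for $\prod_{i<j}(1-y_iy_j)^{-1}$, applying the Littlewood--Richardson rule in the $y$-variables, and extracting the coefficient of $s_\lambda(y)$ (the $s_\lambda(y)$ being linearly independent) yields the claimed multiplicities. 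I expect the main obstacle to lie here, in establishing the symplectic Cauchy identity and threading it correctly through the denominator cancellation of the symplectic Weyl character formula. This is also the point where the hypothesis $\ell(\lambda)\le k$ enters: the symmetric-function identity holds universally, but its specialization to honest $\Sp(2k)$-characters is valid without correction terms only in the stable range $\ell(\lambda)\le k$; outside this range King's modification rules would intervene, and excluding them is exactly the role of the assumption that $\lambda$ has at most $k$ parts.
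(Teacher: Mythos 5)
The paper does not actually prove this proposition: it is imported from the cited sources (King; Howe--Tan--Willenbring), so your attempt can only be compared with the classical arguments behind those citations --- and what you have written is a faithful reconstruction of exactly that classical Littlewood restriction-rule proof. Your three ingredients are correctly stated: the restriction of the character of $V^{2k}_\lambda$ to the symplectic torus is $s_\lambda(x_1,x_1^{-1},\ldots,x_k,x_k^{-1})$; the Littlewood identity $\sum_\delta s_{(2\delta)'}(z)=\prod_{i<j}(1-z_iz_j)^{-1}$; and the symplectic Cauchy identity with the factor $\prod_{i<j}(1-y_iy_j)$ in the numerator. The generating-function comparison then closes correctly, but two points should be made explicit to turn the sketch into a proof. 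First, you must take exactly $k$ auxiliary variables $y_1,\ldots,y_k$: then the ordinary Cauchy sum on the left truncates precisely to $\ell(\lambda)\le k$ (the range of the proposition), and the symplectic Cauchy identity holds in the clean form you quote --- with more than $k$ of the $y$'s it acquires correction terms, which is exactly where the stability hypothesis enters, as you anticipated. Second, to pass from the resulting character identity back to the module decomposition you should note that $c^{\lambda}_{\mu(2\delta)'}\neq 0$ forces $\mu\subseteq\lambda$, hence $\ell(\mu)\le\ell(\lambda)\le k$, so every $\mathrm{sp}_\mu$ occurring is the character of a genuine irreducible $\Sp(2k)$-module; linear independence of irreducible characters (together with complete reducibility) then yields the isomorphism of the proposition, with no modification rules needed. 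Modulo the two standard identities you defer --- both provable by the methods you name --- the argument is correct.
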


Since in the statement of Proposition \ref{prop_LRruleSp} $\lambda$ is a partition in at most $k$ parts, the same holds for the partitions $\mu$.
Hence we do obtain the decomposition of $V^{2k}_{\lambda}$ into a direct sum of irreducible $\Sp(2k)$-modules
$V^{2k}_{\left\langle \mu \right\rangle}$.
When the partition $\lambda$ has more than $k$ parts,
then on the right side of the equation (\ref{eq_LRruleSp}) there will appear terms $V^{2k}_{\left\langle \mu \right\rangle}$,
for which $\mu$ has more that $k$ parts. 
In the paper \cite{K}, it is shown how to regard such terms as elements of the Grothendieck group of $\Sp(2k)$-modules with the help of modification rules.
For the group $\Sp(2k)$ the modification rule is as follows.
Let $\mu = (p, \mu_2', \ldots, \mu_q')'$, i.e., let $\mu$ have $p$ rows with $p > k$. Then the following equivalence formula is derived in \cite{K}:
\begin{align} \label{eq_Sp}
V^{2k}_{\left\langle \mu \right\rangle} = (-1)^{x+1} V^{2k}_{\left\langle \sigma \right\rangle},
\end{align}
where the Young diagram of $\sigma$ is obtained from the Young diagram of $\mu$ by the removal of a continuous boundary hook of length $2p-n-2$
starting from the bottom box of the first column of the Young diagram of $\mu$.
Here $x$ denotes the depth of the hook, i.e., $x+1$ is the number of columns in the hook.
We then repeat this process of removal of a continuous boundary hook until we obtain an admissible Young diagram, i.e. a Young diagram corresponding to a partition $\mu$ with at most $k$ parts.
We write zero for the multiplicity of $V^{2k}_{\left\langle \mu \right\rangle}$ in the branching formula
if the process stops before we obtain an admissible Young diagram (because $2p-n-2=0$)
or we obtain a configuration of boxes which is not a Young diagram.
The latter happens if for the columns of the configuration corresponding to
$\sigma$ the rule $\sigma'_1 \geq \sigma'_2 \geq \cdots \geq \sigma'_n$ is violated.


\begin{proposition}\label{inadmissible reps}
Let $\mu$ be a partition with more than $k$ parts and let $V^{2k}_{\left\langle \mu \right\rangle}$ be the element of the Grothendieck group of $\Sp(2k)$-modules defined by formula (\ref{eq_Sp}). Then $V^{2k}_{\left\langle \mu \right\rangle}$ is equivalent neither to the trivial one-dimensional $\Sp(2k)$-module $V^{2k}_{\left\langle (0,\ldots,0) \right\rangle}$, nor to its inverse in the Grothendieck group of $\Sp(2k)$-modules.
\end{proposition}

\begin{proof}
Let $V^{2k}_{\left\langle \mu \right\rangle}$ be as in the statement of the proposition. Let us assume that,
starting with the Young diagram of the partition $\mu$ and removing
continuous boundary hooks, we obtain the admissible Young diagram without boxes corresponding to the partition $(0,\ldots,0)$.
Hence, one step before the end of the process we shall reach a partition $\nu=(\nu_1,1,\ldots,1)$ with $p>k$ parts.
Since $\nu$ will disappear in the next step, its Young diagram has exactly $2p-2k-2$ boxes, i.e., $\nu_1+p-1=2p-2k-2$
and $\nu_1=p-2k-1=p-n-1<0$ because $\nu$ is a partition in $p\leq n$ parts. This contradiction shows that
$V^{2k}_{\left\langle \mu \right\rangle}$ cannot be equivalent to the trivial
one-dimensional $\Sp(2k)$-module $V^{2k}_{\left\langle (0,\ldots,0) \right\rangle}$.
\end{proof}

\begin{corollary} \label{coro_Sp}

Let ${V^{2k}_{\lambda}}$ be any irreducible $\GL(2k)$-module. Then
\begin{align*}
\dim (V^{2k}_{\lambda})^{\Sp(2k)} = \left \{ \begin{array} {ll}
                                                                                    1 & \text{ if } \lambda_1 = \lambda_2, \lambda_3 = \lambda_4, \dots, \lambda_{2k-1} = \lambda_{2k} \\
                                                                                    0 & \text{ otherwise.} \end{array}
                                                                \right.
\end{align*}
\end{corollary}

\begin{proof}
By Propositions \ref{prop_LRruleSp} and \ref{inadmissible reps}, and the modification rules stated between them,
it is sufficient to calculate in (\ref{eq_LRruleSp})
the Littlewood-Richardson coefficient $c^{\lambda}_{\mu (2\delta)'}$ for the partition $\mu=(0,\ldots,0)$ and to show that
\[
c^{\lambda}_{\mu (2\delta)'}
=\begin{cases} 1, \text{ if }\lambda=(2\delta)'\\
0, \text{ otherwise.}\\
\end{cases}
\]
In order to calculate $c^{\lambda}_{\mu (2\delta)'}$, we start with the diagram of $(2\delta)'$ and add to it the boxes of the diagram of $\mu$
to obtain the diagram of $\lambda$ filling in the boxes from $\mu$ with integers following the Littlewood-Richardson rule (see, e.g., \cite{M}).
Then $c^{\lambda}_{\mu (2\delta)'}$
is equal to the possible ways to do these fillings in. Since the diagram of $\mu$ has no boxes, the only diagram we obtain, and
exactly once, is the diagram of $(2\delta)'$, i.e.,
$c^{\lambda}_{\mu (2\delta)'}=1$ for $\lambda=(2\delta)'$ and $c^{\lambda}_{\mu (2\delta)'}=0$ otherwise.
Clearly, $\lambda=(2\delta)'$ means that $\lambda_1 = \lambda_2$, $\lambda_3 = \lambda_4$, \ldots, $\lambda_{2k-1} = \lambda_{2k}$.
\end{proof}

\begin{remark} The statement of Corollary \ref{coro_Sp} is given with a different proof in \cite{P} on page 410.
\end{remark}

\section{Decomposition of irreducible $\GL(n)$-modules over $\OO(n)$ and $\SO(n)$} \label{secBR_OSO}
In this section we determine the dimensions of the subspaces of $\OO(n)$ and $\SO(n)$ invariants $(V^n_{\lambda})^{\OO(n)}$ and $(V^n_{\lambda})^{\SO(n)}$.
We use a similar approach as in Section \ref{secBR_Sp}.
We start with the description of the structure of $V^n_{\lambda}$ as an $\OO(n)$-module.
The irreducible representations of $\OO(n)$ are indexed by partitions $\mu$ with $\mu'_1 + \mu'_2 \leq n$,
i.e., the sum of the lengths of the first two columns of the Young diagram of $\mu$ should be at most $n$ (see, e.g., \cite{FH, GW}).
We denote the corresponding $\OO(n)$-modules by $V^n_{[\mu]}$. With these notations the following Littlewood-Richardson rule holds.

\begin{proposition}\cite{HTW, K} \label{prop_LRruleO}
Let $\lambda$ be a partition with at most $n/2$ parts. Then
\begin{align*}
{V^{n}_{\lambda}}{\downarrow \OO(n)} \cong \bigoplus_{\mu, 2\delta} c^{\lambda}_{\mu (2\delta)}V^{n}_{[\mu]},
\end{align*}
where the sum runs over all partitions $\mu$ 
and all even partitions $2\delta = (2\delta_1,\ldots, 2\delta_n)$.
\end{proposition}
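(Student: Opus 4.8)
The plan is to establish the $\OO(n)$ analogue of Proposition~\ref{prop_LRruleSp} using the same two ingredients: the Littlewood--Richardson branching rule together with Littlewood's modification rules, and the geometric analysis of what happens when inadmissible representations are reduced. I would first verify that the two statements are formally parallel. The only structural difference is that the symplectic rule uses the conjugate partitions $(2\delta)'$ (even \emph{columns}) whereas the orthogonal rule uses the even partitions $2\delta$ themselves (even \emph{rows}); this reflects the fact that the defining invariant form for $\Sp(2k)$ is alternating while for $\OO(n)$ it is symmetric, so the relevant associated series are $\prod(1-x_ix_j)^{-1}$ versus $\prod(1-x_i x_j)^{-1}$ over $i\le j$. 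Accordingly, the admissibility condition changes from ``at most $k$ rows'' to the condition $\mu_1'+\mu_2'\le n$ on the first two columns, exactly as stated.

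First I would recall the Littlewood--Richardson expansion underlying both rules. The branching multiplicity of $V^n_{[\mu]}$ in the restriction of $V^n_\lambda$ to $\OO(n)$ is, before modification, the coefficient $\sum_{\delta} c^\lambda_{\mu\,(2\delta)}$, where $c^\lambda_{\mu\nu}$ are the ordinary Littlewood--Richardson coefficients and $2\delta$ runs over even partitions. This is Littlewood's classical restriction formula (see \cite{M}), valid in the stable range $\lambda$ has at most $n/2$ parts, which is precisely the hypothesis imposed in the statement. The main point to make explicit is that in this stable range no modification is necessary: every $V^n_{[\mu]}$ appearing on the right already satisfies $\mu_1'+\mu_2'\le n$, so the formula gives a genuine decomposition into irreducible $\OO(n)$-modules. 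I would state this observation and cite \cite{HTW, K} for the branching rule and \cite{K} for the governing of the non-stable case (which is not needed here but should be acknowledged for completeness).

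The one place that requires real care is isolating the hypothesis under which no modification occurs, so that the sum over $\mu$ and $2\delta$ is clean. The hard part will be making sure that the bound ``$\lambda$ has at most $n/2$ parts'' is exactly what forces $\mu$ to remain admissible for every nonzero term $c^\lambda_{\mu\,(2\delta)}$: since $2\delta$ is even and $c^\lambda_{\mu\,(2\delta)}\ne 0$ forces $\ell(\mu)\le\ell(\lambda)\le n/2$, one gets $\mu_1'\le n/2$ and hence $\mu_1'+\mu_2'\le 2\mu_1'\le n$. I would lay out this elementary inequality explicitly, because it is the analogue of the ``at most $k$ parts'' remark made immediately after Proposition~\ref{prop_LRruleSp} for the symplectic case, and it is what guarantees the right-hand side genuinely decomposes $V^n_\lambda$. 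With this in place, the proposition follows by direct appeal to the cited branching rule, so the proof is short and structurally identical to the discussion preceding Corollary~\ref{coro_Sp}.
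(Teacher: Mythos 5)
Your proposal is correct and takes essentially the same route as the paper: the paper gives no independent proof of this proposition, stating it as a result quoted from \cite{HTW} and \cite{K}, which is exactly your ``direct appeal to the cited branching rule.'' The one piece of content you add --- that $c^{\lambda}_{\mu(2\delta)}\neq 0$ forces $\mu_1'=\ell(\mu)\leq\ell(\lambda)\leq n/2$ and hence $\mu_1'+\mu_2'\leq n$, so no modification rules are needed in the stable range --- is a correct elementary check and parallels the remark the paper makes immediately after Proposition \ref{prop_LRruleSp} in the symplectic case.
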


When $\lambda$ has more than $n/2$ parts the above branching formula does not hold and we use again the modification rules from \cite{K}.
The Young diagram of a parition $\mu = (p, \mu_2', \ldots, \mu_q')'$ is called inadmissible for $\OO(2k)$ or $\OO(2k+1)$ if $p > k$, i.e., if the first column in the Young diagram of $\mu$ has more than $k$ boxes. For such $\mu$, we regard the term $V^{n}_{[\mu]}$ as an element of the Grothendieck group of $\OO(n)$-modules using the following equivalence formula from \cite{K}:
\begin{equation} \label{eq_Equivalence_O}
V^{n}_{[\mu]} = (-1)^x \varepsilon V^{n}_{[\sigma]},
\end{equation}
where the Young diagram of $\sigma$ is obtained from the Young diagram of $\mu$ by the removal of a continuous boundary hook of length $2p - n$
starting in the first column of $\mu$. Here again $x$ denotes the depth of the hook and $\varepsilon$
is the determinant of the matrix of the particular group element from $\OO(n)$ acting on $V^{n}_{\lambda}$.
As before, we repeat the process of removal of a continuous boundary hook until we obtain an admissible Young diagram
or until we obtain a configuration of boxes which is not a Young diagram.
In the latter case we write zero in the branching formula. Using this modification rule and repeating the arguments in the proof
of Proposition \ref{inadmissible reps} we obtain the following statement.


\begin{proposition}Let $\mu$ be a partition with more than $\left\lfloor \frac{n}{2}\right\rfloor$ parts and let $V^{n}_{[\mu]}$ be the element of the Grothendieck group of $\OO(n)$-modules defined by formula (\ref{eq_Equivalence_O}). Then $V^{n}_{[\mu]}$ is equivalent neither to the trivial one-dimensional $\OO(n)$-module, nor to its inverse in the Grothendieck group of $\OO(n)$-modules.
\end{proposition}

Then, as in Section \ref{secBR_Sp}, we obtain the description of the $\GL(n)$-modules ${V^{n}_{\lambda}}$ which contain the trivial $\OO(n)$-module
$V^{n}_{[(0,\ldots,0)]}$.

\begin{corollary} \label{coro_O} Let ${V^{n}_{\lambda}}$ be any irreducible $\GL(n)$-module. Then
\begin{align*}
&\dim (V^n_{\lambda})^{\OO(n)} = \left \{ \begin{array} {ll}
                                                                                    1 & \text{ if } \lambda \text{ is an even partition}\\
                                                                                    0 & \text{ otherwise.} \end{array}
                                                                \right.
\end{align*}
\end{corollary}

\begin{remark} The statement of Corollary \ref{coro_O} can be found again, using a different approach, in \cite{P} on page 409.
\end{remark}

When we consider the subgroup $\SO(n)$, then $\varepsilon = 1$ for any group element in the equivalence formula (\ref{eq_Equivalence_O}).
Furthermore, all irreducible $\OO(n)$-modules $V^{n}_{[\mu]}$ remain irreducible when restricted to $\SO(n)$,
except for the case $n = 2k$ and $\mu = (\mu_1, \ldots, \mu_k, 0, \ldots, 0)$ with $\mu_k \neq 0$.
Such representations split into two irreducible $\SO(n)$-representations. Using these considerations we make the following observation.


\begin{proposition} \label{prop_Triv}
Let $\mu$ be a partition with more than $\left\lfloor \frac{n}{2}\right\rfloor$ parts and let $V^{n}_{[\mu]}$ be the element of the Grothendieck group of $\SO(n)$-modules defined by formula (\ref{eq_Equivalence_O}). Then $V^{n}_{[\mu]}$ is equivalent to the trivial one-dimensional $\SO(n)$-module if and only if $\mu = \underbrace{(1,1,\ldots, 1)}_n$.
\end{proposition}

\begin{corollary} \label{coro_SO} Let ${V^{n}_{\lambda}}$ be any irreducible $\GL(n)$-module. Then
\begin{align*}
&\dim (V^n_{\lambda})^{\SO(n)} = \left \{ \begin{array} {ll}
                                                                                    1 & \text{ if } \lambda \text{ is an even or an odd partition}\\
                                                                                    0 & \text{ otherwise.} \end{array}
                                                                \right.
\end{align*}
\end{corollary}

\begin{proof}
In view of Propositions \ref{prop_LRruleO} and \ref{prop_Triv} we only need to evaluate the Littlewood-Richardson
coefficient $c^{\lambda}_{\mu (2\delta)}$ for $\mu = (0, \ldots, 0)$ and $\mu = \underbrace{(1,1,\ldots, 1)}_n$. For $\mu = (0, \ldots, 0)$ this is trivial.

When $\mu = (1,1,\ldots, 1)$ we use the following Pieri rule (see, e.g., \cite{FH}): $c^{\lambda}_{\mu (2\delta)} = 1$
if and only if we can obtain $\lambda$ from $2\delta$ by adding one box to each row.
In all other cases $c^{\lambda}_{\mu (2\delta)} = 0$. In other words, the only possibility for $\lambda$ is
$\lambda = (2\delta_1 + 1, \ldots, 2\delta_n +1)$. Thus the statement follows.
\end{proof}

\section{Determining the Hilbert series} \label{sec_HilbSeries}
The goal of this section is to determine the Hilbert series $H(S(W)^G, t)$ for $G = \OO(n)$, $\SO(n)$, and $\Sp(2k)$
by using Hilbert series of multigraded algebras.
We recall that if
\[
A = \bigoplus_{\mu \in \NN_0^n} A(\mu)
\]
is a finitely generated algebra with an $\NN_0^n$-grading,
then the Hilbert series of $A$ with respect to this grading is the formal power series $H(A, x_1, \ldots, x_n) \in \ZZ[[x_1, \dots, x_n]]$ defined by
\[
H(A, x_1, \ldots, x_n) = \sum_{\mu = (\mu_1, \ldots, \mu_n) \in \NN_0^n} \dim A(\mu) x_1^{\mu_1} \cdots x_n^{\mu_n}.
\]
This definition makes sense also for multigraded vector spaces. One example of a vector space with an $\NN_0^n$-grading is the $\GL(n)$-module $V^n_{\lambda}$ together with its weight space decomposition.
The Hilbert series of $V^n_{\lambda}$ with respect to this grading has the form
\[
H(V^n_{\lambda}, x_1, \ldots, x_n) = S_{\lambda}(x_1, \ldots, x_n),
\]
where $S_{\lambda}(x_1, \ldots, x_n)$ is the Schur polynomial corresponding to the partition $\lambda$.
Consequently, any polynomial $\GL(n)$-module $W$ has an $\NN_0^n$-grading and a corresponding Hilbert series which is again expressed via Schur polynomials.

Let $W$ be any polynomial $\GL(n)$-module. We take the decomposition of the symmetric algebra $S(W)$ into irreducible $\GL(n)$-modules
\begin{equation*} 
S(W) = \bigoplus_{l\geq 0} S^lW =  \bigoplus_{l\geq 0}\bigoplus_{\lambda} m_l(\lambda) V^n_{\lambda},
\end{equation*}
where the second sum runs over all partitions $\lambda \in \NN_0^n$.
Thus $S(W)$ possesses a natural $\NN_0$-grading coming from the decomposition into homogeneous components
and a natural $\NN_0^n$-grading coming from the weight space decomposition of each $V^n_{\lambda}$.
As in \cite{BBDGK}, we consider the following Hilbert series of $S(W)$, which takes into account both gradings
\begin{align*}
H(S(W); x_1, \ldots, x_n, t) = &\sum_{l \geq 0} H(S^lW, x_1, \ldots, x_n) t^l = \\ &\sum_{l \geq 0}\left (\sum_{\lambda} m_l(\lambda) S_{\lambda}(x_1, \ldots, x_n)\right ) t^l.
\end{align*}
Clearly, $H(S(W); x_1,\ldots, x_n, t) \in \ZZ[[x_1, \ldots, x_n]][[t]]$.
Furthermore, as in \cite{BBDGK} again, we introduce the following two multiplicity series of $H(S(W); x_1, \ldots, x_n, t)$:
\[
M(H(S(W)); x_1, \ldots, x_n, t) = \sum_{l \geq 0}\left(\sum_{\lambda} m_l(\lambda)x_1^{\lambda_1} \cdots x_n^{\lambda_n} \right)t^l,
\]
\[
M'(H(S(W)); v_1,\ldots, v_n, t) = \sum_{l \geq 0}\left(\sum_{\lambda}
m_l(\lambda)v_1^{\lambda_1 - \lambda_2} v_2^{\lambda_2 - \lambda_3} \cdots v_{n-1}^{\lambda_{n-1} - \lambda_n} v_n^{\lambda_n}\right)t^l.
\]
The second multiplicity series is obtained from the first one using the change of variables
\[
v_1 = x_1, v_2 = x_1x_2, \ldots, v_n = x_1 \cdots x_n.
\]
The following theorem is the main tool to calculate the Hilbert series of the algebras of invariants $S(W)^G$ for $G=\Sp(2k),\OO(n),\SO(n)$,
which is done in the next sections:

\begin{theorem} \label{thm_HilbertSeries} Let $W$ be as above.

{\rm (i)} The Hilbert series of the algebra of invariants $S(W)^{\Sp(2k)}$ (where $n = 2k$) is given by
\[
H(S(W)^{\Sp(2k)}, t) = M'(H(S(W)); 0, 1, 0, 1 ,\ldots, 0, 1, t).
\]

{\rm (ii)} The Hilbert series of the algebra of invariants $S(W)^{\OO(n)}$ is
\[
H(S(W)^{\OO(n)}, t) = M_{n}(t),
\]
where $M_n$ is defined iteratively in the following way:
\[
M_1(x_2, \ldots, x_n, t) = \frac{1}{2}\left(M(H(S(W)); -1, x_2, \ldots, x_n,t) + M(H(S(W)); 1, x_2, \ldots, x_n, t)\right),
\]
\[
M_2(x_3, \ldots, x_n, t) = \frac{1}{2}(M_1(-1, x_3, \ldots, x_n, t) + M_1(1, x_3, \ldots, x_n, t))
\]
\[
\cdots\cdots\cdots
\]
\[
M_n(t) = \frac{1}{2}(M_{n-1}(-1, t) + M_{n-1}(1, t)).
\]

{\rm (iii)} The Hilbert series of the algebra of invariants $S(W)^{\SO(n)}$ is
\[
H(S(W)^{\SO(n)}, t) = M'_{n}(t),
\]
where
\[
M'_1(v_2, \ldots, v_n, t) = \frac{1}{2}(M'(H(S(W)); -1, v_2, \ldots, v_n,t) + M'(H(S(W)); 1, v_2, \ldots, v_n, t)),
\]
\[
M'_2(v_3, \ldots, v_n, t) = \frac{1}{2}(M'_1(-1, v_3, \ldots, v_n, t) + M'_1(1, v_3, \ldots, v_n, t))
\]
\[
\cdots\cdots\cdots
\]
\[
M'_{n-1}(v_n, t) = \frac{1}{2}(M'_{n-2}(-1,v_n, t) + M'_1(1, v_n, t)),
\]
\[
M'_n(t) = M'_{n-1}(1, t).
\]
\end{theorem}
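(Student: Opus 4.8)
The core idea is that the Hilbert series of the invariant algebra counts, in each degree $l$, the multiplicity of the trivial $G$-module inside $S^lW$. For each of the three groups, Corollaries \ref{coro_Sp}, \ref{coro_O}, and \ref{coro_SO} tell us exactly which $\GL(n)$-modules $V^n_\lambda$ contain a trivial $G$-module (always with multiplicity one), so
\[
\dim (S^lW)^G = \sum_{\lambda} m_l(\lambda)\,[V^n_\lambda\text{ contains the trivial }G\text{-module}].
\]
Thus $H(\CC[W]^G,t)=\sum_{l\geq 0}\big(\sum_{\lambda\in T_G} m_l(\lambda)\big)t^l$, where $T_G$ is the set of admissible $\lambda$ from the relevant corollary. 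My task in each case is to realize this restricted sum over $T_G$ as a specialization of the multiplicity series $M$ or $M'$.

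For part (i), $T_{\Sp(2k)}$ consists of partitions with even columns, i.e. $\lambda_1=\lambda_2$, $\lambda_3=\lambda_4,\dots,\lambda_{2k-1}=\lambda_{2k}$. In the $M'$-series each $\lambda$ contributes the monomial $v_1^{\lambda_1-\lambda_2}\cdots v_{n-1}^{\lambda_{n-1}-\lambda_n}v_n^{\lambda_n}$. First I would observe that setting the odd-indexed variables $v_1,v_3,\dots,v_{2k-1}$ to $0$ kills every term in which any of the exponents $\lambda_1-\lambda_2,\lambda_3-\lambda_4,\dots,\lambda_{2k-1}-\lambda_{2k}$ is positive, leaving precisely the even-column partitions; setting the remaining even-indexed variables $v_2,\dots,v_n$ to $1$ then collapses each surviving monomial to $1$, so that the coefficient of $t^l$ becomes exactly $\sum_{\lambda\in T_{\Sp}} m_l(\lambda)$. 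This yields the evaluation $M'(\,\cdot\,;0,1,\dots,0,1,t)$. The only subtlety is to check that the substitution $v_j=0$ is legitimate termwise in the formal power series, which holds because for fixed $l$ the inner sum is finite.

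For parts (ii) and (iii) the selection is by parity of all parts rather than by coincidences of consecutive parts, so a single substitution does not suffice and I would instead use an averaging (projection) argument. The model is the standard fact that $\frac{1}{2}(f(1)+f(-1))$ extracts the even-degree part of a series in one variable. First I would pass to the $M$-series, where $\lambda$ contributes $x_1^{\lambda_1}\cdots x_n^{\lambda_n}$, and note that averaging over $x_1\mapsto\pm1$ retains exactly the terms with $\lambda_1$ even; iterating this across $x_2,\dots,x_n$ (which is what the cascade $M_1,M_2,\dots,M_n$ encodes) retains exactly the terms with all parts even, i.e. $\lambda\in T_{\OO(n)}$, and simultaneously sets each retained monomial to $1$. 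This proves (ii). For (iii), $T_{\SO(n)}$ consists of partitions that are either all-even or all-odd, so I would work with $M'$ and average the parity extraction down through $v_2,\dots,v_{n-1}$ while leaving $v_n$ untouched: the exponents $\lambda_1-\lambda_2,\dots,\lambda_{n-1}-\lambda_n$ attached to $v_1,\dots,v_{n-1}$ must all be even, which forces all parts to have a common parity, and the final step $M'_n(t)=M'_{n-1}(1,t)$ merely sets $v_n=1$ so that both the all-even and the all-odd families survive and each surviving monomial collapses to $1$.

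\textbf{Main obstacle.} The delicate point is the bookkeeping in parts (ii) and (iii): I must verify that the iterated averaging selects \emph{precisely} the desired parity pattern and, crucially, that each selected term contributes exactly $1$ (not some other constant) to the coefficient of $t^l$, so that the result equals $\sum_{\lambda\in T_G} m_l(\lambda)$ as required. In particular for (iii) I would need to confirm that translating the ``all parts share a common parity'' condition into the $v$-coordinates through the change of variables $v_1=x_1,\dots,v_n=x_1\cdots x_n$ correctly captures both the all-even and all-odd cases, and that leaving $v_n$ free until the final substitution is what allows the odd family to survive the averaging. Once the selection and normalization are confirmed, the three formulas follow by matching coefficients of $t^l$ against the corollaries.
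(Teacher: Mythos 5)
Your proposal is correct and follows essentially the same route as the paper: reduce via Corollaries \ref{coro_Sp}, \ref{coro_O}, and \ref{coro_SO} to counting $m_l(\lambda)$ over the relevant set of partitions, then extract those sums from $M'$ by the substitution $(0,1,\ldots,0,1)$ in case (i) and from $M$ (resp.\ $M'$) by iterated averaging over $\pm 1$ in cases (ii) and (iii), with $v_n$ left free until the final substitution so that both the all-even and all-odd families survive. The paper states these specializations without elaboration, so your explicit verification of the parity-selection and normalization bookkeeping is simply a fuller write-up of the same argument.
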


\begin{proof}

(i) 
We take again the decomposition of $S(W)$ into irreducible $\GL(n)$-modules:
\begin{equation} \label{eq_PolyAlgDecomp}
S(W) = \bigoplus_{l\geq 0} S^lW =  \bigoplus_{l\geq 0}\bigoplus_{\lambda} m_l(\lambda) V^n_{\lambda},
\end{equation}
where the second sum runs over all partitions $\lambda \in \NN_0^n$.
Therefore,
\[
S(W)^{\Sp(2k)} = \bigoplus_{l\geq 0} \bigoplus_{\lambda} m_l(\lambda) (V^n_{\lambda})^{\Sp(2k)}.
\]
The definition of Hilbert series gives
\[
H(S(W)^{\Sp(2k)}, t) =\sum_{l\geq 0}\left(\sum_{\lambda} m_l(\lambda) \dim (V^n_{\lambda})^{\Sp(2k)}\right) t^l.
\]
Hence, Corollary \ref{coro_Sp} implies that
\[
H(S(W)^{\Sp(2k)}, t) = \sum_{l\geq 0} \left(\sum_{\lambda_1 = \lambda_2, \ldots, \lambda_{2k-1} = \lambda_{2k}}
m_l(\lambda)\right) t^l.
\]

Moreover, if we evaluate the multiplicity series $M'(H(S(W)); v_1,\ldots, v_{2k}, t)$
at the point $(v_1, v_2 \ldots,v_{2k-1}, v_{2k}) = (0,1, \ldots, 0,1)$ we obtain
\[
M'(H(S(W)); 0, 1,\ldots, 0, 1, t) = \sum_{l\geq 0} \left(\sum_{\lambda_1 = \lambda_2, \ldots, \lambda_{2k-1} = \lambda_{2k}}
m_l(\lambda)\right) t^l =
\]
\[
H(S(W)^{\Sp(2k)}, t).
\]

(ii) Similarly, for $H(S(W)^{\OO(n)}, t)$ we obtain
\[
H(S(W)^{\OO(n)}, t) = \sum_{l\geq 0}\left(\sum_{\lambda} m_l(\lambda) \dim (V^n_{\lambda})^{\OO(n)}\right) t^l.
\]
Thus, Corollary \ref{coro_O} implies
\[
H(S(W)^{\OO(n)}, t) = \sum_{l\geq 0} \left(\sum_{\substack{\lambda- \text{an even} \\ \text{partition}}} m_l(\lambda)\right) t^l
= M_{n}(t),
\]
where $M_n$ is defined as in the statement of Theorem \ref{thm_HilbertSeries} (ii).

(iii) For $S(W)^{\SO(n)}$ we obtain, using Corollary \ref{coro_SO}
\[
H(S(W)^{\SO(n)}, t)
= \sum_{l\geq 0} \left(\sum_{\substack{\lambda- \text{an even or an odd} \\ \text{partition}}} m_l(\lambda)\right) t^l
= M'_{n}(t),
\]
where $M'_n$ is defined as in the statement of Theorem \ref{thm_HilbertSeries} (iii).
\end{proof}

\section{Examples} \label{sec_Examples}
In this section we use our results from Section \ref{sec_HilbSeries} to compute the Hilbert series
$H(S(W)^{\Sp(2k)}, t)$, $H(S(W)^{\OO(n)}, t)$, and $H(S(W)^{\SO(n)}, t)$
for several explicit $\GL(n)$-modules $W$.
Using the results we obtain, we can determine in each case
the minimal degree of the generators of the respective algebras of invariants and in particular the number of generators of lowest degree.
However, in general, we cannot give complete information about how many generators there are and in what degrees and we cannot determine
if there are relations between them. Nevertheless, the knowledge of the Hilbert series simplifies the solution of the problem
to find a presentation of the algebra in terms of generators and defining relations (see, e.g., \cite{DK}).
We point out when, to our knowledge, the respective example is already known in the literature,
and in particular when the algebra $S(W)^G$ is a polynomial algebra. 

In the cases we consider below the input data is the decomposition
(\ref{decomposition of W}) of the $\GL(n)$-module $W$ into a sum of irreducible submodules. Then the Hilbert series of $W$ is
\[
H(W,x_1,\ldots,x_n)=\sum_{\lambda} k(\lambda) S_{\lambda}(x_1,\ldots,x_n)=\sum_{\mu=(\mu_1,\ldots,\mu_n)\in\NN_0^n}a_{\mu}x_1^{\mu_1}\cdots x_n^{\mu_n}
\]
and the Hilbert series of $S(W)$ as a multigraded algebra is
\begin{align} \label{eq_HilbSeriesS}
H(S(W),x_1,\ldots,x_n)=\prod_{\mu=(\mu_1,\ldots,\mu_n)\in\NN_0^n}\frac{1}{(1-x_1^{\mu_1}\cdots x_n^{\mu_n})^{a_{\mu}}}.
\end{align}

In the first three examples we know in advance the decomposition of $S(W)$ into irreducible $\GL(n)$-modules,
described in (\ref{eq_PolyAlgDecomp}). Hence, we can easily determine the multiplicity series $M$ and $M'$.
In all three cases the algebra of invariants $S(W)^G$, for $G = \SO(n)$ or $\Sp(2k)$,
is known to be a polynomial algebra and the degrees of its generators are given in \cite{Ka} for irreducible $W$ and in \cite{S} for reducible $W$.
We conclude that $S(W)^{\OO(n)}$ is also a polynomial algebra in these three cases and the expressions
for the respective Hilbert series which we find below are enough to describe the relations between the algebras $S(W)^{\OO(n)}$ and $S(W)^{\SO(n)}$  and, in particular, the degrees of all generators in a minimal set of generators of $S(W)^{\OO(n)}$.

\begin{example} \label{ex_SS2V}
Let $V = \CC^n$ denote the standard $\GL(n)$-module and let $W = S^2V$ be the second symmetric power of $V$.
In other words, $W = V^n_{\lambda}$ with $\lambda = (2, 0, \ldots, 0)$. The decomposition (\ref{eq_PolyAlgDecomp}) is known in this case
(see, e.g., \cite{GW}) and is given by
\[
S(S^2V) = \bigoplus_{l \geq 0}\bigoplus_{\substack{|\lambda| = 2l\\ \lambda- \text{even}}} V^n_{\lambda}.
\]
Thus the multiplicity series
$M(H(S(S^2V)); x_1, \ldots, x_n, t)$  and $M'(H(S(S^2V)); v_1, \ldots, v_n, t)$ are, respectively, equal to (see also \cite{BBDGK}):
\begin{align*}
&M(H(S(S^2V)); x_1, \ldots, x_n, t) = \prod_{i = 1}^n \frac{1}{1-(x_1 \cdots x_i)^2t^i},\\
&M'(H(S(S^2V)); v_1, \ldots, v_n, t) = \prod_{i = 1}^n \frac{1}{1-v_i^2t^i}.
\end{align*}
Using Theorem \ref{thm_HilbertSeries}, we obtain
\begin{align*} 
H(S(S^2V)^{\Sp(2k)}, t) = \prod_{i = 1}^k \frac{1}{1-t^{2i}},\text{ where }n=2k,
\end{align*}
\begin{equation} \label{eq_HilbOSO}
H(S(S^2V)^{\OO(n)}, t) = H(S(S^2V)^{\SO(n)}, t) =\prod_{i = 1}^n \frac{1}{1-t^i}.
\end{equation}

In \cite{Ka} it is shown that the algebra of invariants $S(S^2V)^{\Sp(2k)}$
is a polynomial algebra with $k$ generators in degrees respectively $2,4, \ldots, 2k$.

For $S(S^2V)^{\SO(n)}$ we use, in the notations of Section \ref{secBR_OSO}, that
\[
S^2V \downarrow \SO(n) \cong V^n_{[(2, 0, \ldots, 0)]} \oplus V^n_{[(0, 0, \ldots, 0)]}.
\]
Hence, it follows from \cite{Ka} that $S(S^2V)^{\SO(n)}$ is a polynomial algebra too with $n$ generators in degrees respectively $1,2, \ldots, n$.
\end{example}

Formula (\ref{eq_HilbOSO}) implies the following immediate corollary.

\begin{corollary} For all $n$ we have
$$
S(S^2V)^{\OO(n)} = S(S^2V)^{\SO(n)}.
$$
\end{corollary}
\begin{proof}
This follows from the fact that for each $l \geq 0$ we have $S^l(S^2V)^{\OO(n)} \subseteq S^l(S^2V)^{\SO(n)}$ and at the same time
$
\dim S^l(S^2V)^{\OO(n)} = \dim S^l(S^2V)^{\SO(n)}.
$

\end{proof}

\begin{example} \label{ex_SLambda}
Next, let us take $W = \Lambda^2 V$, the second exterior power of $V$. Then $W = V^n_{\lambda}$ with $\lambda = (1,1, 0, \ldots, 0)$ and it is known that
\[
S(\Lambda^2 V) = \bigoplus_{l \geq 0}\bigoplus_{\substack{\lambda}} V^n_{\lambda},
\]
where the second sum runs over all partitions $\lambda$ with $|\lambda| = 2l$
and such that $\lambda_{2i-1} = \lambda_{2i}$ for $i = 1, \ldots, \lfloor n/2 \rfloor$.
When $n$ is odd we also have $\lambda_{n} = 0$ (see, e.g., \cite{GW}).
Then for the multiplicity series $M$ and $M'$ one obtains (see also \cite{BBDGK})
\begin{align*}
&M(H(S(\Lambda^2V)); x_1, \ldots, x_n, t) = \prod_{i = 1}^{\lfloor \frac{n}{2} \rfloor} \frac{1}{1-(x_1 \cdots x_{2i})t^i},\\
&M'(H(S(\Lambda^2V)); v_1, \ldots, v_n, t) = \prod_{i = 1}^{\lfloor \frac{n}{2} \rfloor} \frac{1}{1-v_{2i}t^i}.
\end{align*}
Thus,
\[
H(S(\Lambda^2V)^{\Sp(2k)}, t) = \prod_{i = 1}^k \frac{1}{1-t^{i}},
\]
\[
H(S(\Lambda^2V)^{\OO(n)}, t) = \prod_{i = 1}^{\lfloor \frac{n}{2} \rfloor} \frac{1}{1-t^{2i}},
\]
\[
H(S(\Lambda^2V)^{\SO(n)}, t) =\begin{cases}
{\displaystyle\frac{1}{1-t^k}\prod_{i = 1}^{k-1} \frac{1}{1-t^{2i}},}   & \text{ if } n = 2k, \\
{\displaystyle\prod_{i = 1}^k \frac{1}{1-t^{2i}},} & \text{ if } n = 2k+1.\end{cases}
\]

It is known (see, e.g., \cite{Ka}) that the algebra of invariants $S(\Lambda^2V)^G$ for $G = \SO(n)$ is a polynomial algebra with the following generators:
if $n = 2k +1$ there are $k$ generators in degrees respectively $2,4,\ldots, 2k$;
if $n = 2k$ there are $k-1$ generators in degrees respectively $2,4,\ldots, 2(k-1)$ and one generator in degree $k$.

For $S(\Lambda^2V)^{\Sp(2k)}$ we use, in the notations of Section \ref{secBR_Sp} that
\[
\Lambda^2V \downarrow \Sp(2k) \cong V^n_{\left\langle (1, 1,0, \ldots, 0)\right\rangle} \oplus V^n_{\left\langle (0, 0, \ldots, 0)\right\rangle}.
\]
Therefore, it follows from \cite{Ka} that $S(\Lambda^2V)^{\Sp(2k)}$ is also a polynomial algebra with $k$ generators in degrees respectively $1,2, \ldots, k$.
\end{example}

Having in mind that $\OO(2k+1) = \SO(2k+1) \times \{\mathrm{-Id}, \mathrm{Id}\}$, where $\mathrm{Id}$ denotes the identity matrix of the respective dimension, it is clear that $S(\Lambda^2V)^{\OO(2k+1)} = S(\Lambda^2V)^{\SO(2k+1)}$.

We now will determine the structure of $S(\Lambda^2V)^{\OO(2k)}$. For all $n$, we have that $\OO(n)$ is the semidirect product of $\SO(n)$ and the cyclic group $\{R, \mathrm{Id}\}$, where $R$ is a reflection of $\CC^{n}$ which keeps the origin fixed. We first prove the following easy lemma.

\begin{lemma} \label{lemma_O_SO_Inv} If $\dim S^l(W)^{\SO(n)} = 1$ and if $f$ is a generator of $S^l(W)^{\SO(n)}$ then $f$ or $f^2$ is an $\OO(n)$-invariant.
\end{lemma}

\begin{proof}
We fix a reflection $R$ of $\CC^{n}$ so that $\OO(n)$ is the semidirect product of $\SO(n)$ and $\{R, \mathrm{Id}\}$. 
Let $A$ be an element of $\SO(n)$. Then, $ARf = RA_1f$, for some other element $A_1 \in \SO(n)$. Hence, $ARf = Rf$, that is  $Rf \in S^l(W)^{\SO(n)}$. Since $S^l(W)^{\SO(n)}$ is one dimensional and $R$ is a reflection, it follows that $Rf = f$ or $Rf = -f$. This proves the statement.
\end{proof}

\begin{corollary}\label{coro_SLambda2V_O} There exists a generating set $\{f_{2i}\}_{i=1}^{k-1} \cup \{g\}$ of $S(\Lambda^2V)^{\SO(2k)}$ (where $\deg f_j = j$ and $\deg g= k$) such that $\{f_{2i}\}_{i=1}^{k-1} \cup \{g^2\}$ is a generating set for $S(\Lambda^2V)^{\OO(2k)}$. In particular, $S(\Lambda^2V)^{\OO(2k)}$ is a polynomial algebra.
\end{corollary}
\begin{proof}
Let $\{f_{2i}\}_{i=1}^{k-1} \cup \{g\}$ be a generating set for $S(\Lambda^2V)^{\SO(2k)}$.

Assume first that $k$ is odd. Then, Example \ref{ex_SLambda} implies that $\dim S^{2i}(\Lambda^2V)^{\OO(2k)} = \dim S^{2i}(\Lambda^2V)^{\SO(2k)}$ for all $ 0 \leq i \leq k-1$ and that $\dim S^k(\Lambda^2V)^{\SO(2k)} = 1$. Therefore, $S^{2i}(\Lambda^2V)^{\OO(2k)} = S^{2i}(\Lambda^2V)^{\SO(2k)}$ for all $ 0 \leq i \leq k-1$. Furthermore, by Lemma \ref{lemma_O_SO_Inv} and by the fact that $\dim S^k(\Lambda^2V)^{\OO(2k)} = 0$ it follows that $g$ is not $\OO(2k)$-invariant but $g^2$ is an $\OO(2k)$-invariant. Thus, $\{f_{2i}\}_{i=1}^{k-1} \cup \{g^2\}$ is a generating set for $S(\Lambda^2V)^{\OO(2k)}$.

Next, we consider the case when $k = 2j_0$ for some $j_0$. Then, $\dim S^{2i}(\Lambda^2V)^{\OO(2k)} = \dim S^{2i}(\Lambda^2V)^{\SO(2k)}$ for all $ 0 \leq i < j_0$, hence $S^{2i}(\Lambda^2V)^{\OO(2k)} = S^{2i}(\Lambda^2V)^{\SO(2k)}$ for $i < j_0$. In $S^{k}(\Lambda^2V)^{\SO(2k)}$ we have a two dimensional $\OO(2k)$-invariant subspace $U = \mathrm{span} \{f_k, g\}$. Since $R^2 = \mathrm{Id}$, it follows that $R$ has two eigenvalues as a linear operator on $U$, namely $1$ and $-1$. Therefore, $R$ has eigenvectors $\tilde{f}_k$ and $\tilde{g}$ in $U$, such that $R\tilde{f}_k = \tilde{f}_k$ and $R\tilde{g} = -\tilde{g}$. Thus we create a new generating set for $S(\Lambda^2V)^{\SO(2k)}$, by replacing $f_k$ with $\tilde{f}_k$ and $g$ with $\tilde{g}$. Similarly, for $i > j_0$, there is a two dimensional $\OO(2k)$-invariant subspace of $S^{2i}(\Lambda^2V)^{\SO(2k)}$ spanned by $\{f_{2i}, f_{2(i-j_0)} \cdot \tilde{g} \}$. Thus, there is an $\OO(2k)$-invariant vector in $S^{2i}(\Lambda^2V)^{\SO(2k)}$, which we denote by $\tilde{f}_{2i}$. Therefore, $\{f_{2i}\}_{i=1}^{j_0-1} \cup \{\tilde{f}_{2i}\}_{i=j_0}^{k-1} \cup \{\tilde{g}\}$ is a generating set for $S(\Lambda^2V)^{\SO(2k)}$ such that $\{f_{2i}\}_{i=1}^{j_0-1} \cup \{\tilde{f}_{2i}\}_{i=j_0}^{k-1} \cup \{\tilde{g}^2\}$ is a generating set for $S(\Lambda^2V)^{\OO(2k)}$.
\end{proof}

\begin{example}\label{ex_SVLambdaV} Let $W = V \oplus \Lambda^2V$. The decomposition (\ref{eq_PolyAlgDecomp}) of $S(W)$ can be found in the following way
(see also \cite{BBDGK} and, in the language of symmetric functions, \cite[the second edition, page 76, Example 4]{M})

\begin{align}\label{eq_decomp_VLambdaV}
S(V \oplus \Lambda^2V) = S(V) \otimes S(\Lambda^2V) 
= \bigoplus_{\lambda} V^n_{\lambda},
\end{align}
where the last sum is over all partitions $\lambda \in \NN_0^n$. Hence, for the multiplicity series we obtain (see also \cite{BBDGK})
\[
M'(H(S(W)); v_1, \ldots, v_n, t) = \prod_{2i \leq n}\frac{1}{(1-v_{2i-1}t^i)(1-v_{2i}t^i)}, \text{ for }n = 2k,
\]
\[
M'(H(S(W)); v_1, \ldots, v_n, t) = \frac{1}{1-v_nt^{(n+1)/2}}\prod_{2i < n}\frac{1}{(1-v_{2i-1}t^i)(1-v_{2i}t^i)}, \text{ for }n = 2k+1.
\]
Therefore, when $n = 2k$ we obtain
\[
H(S(W)^{\Sp(2k)}, t) = \prod_{i \leq k} \frac{1}{1-t^{i}},
\]
\[
H(S(W)^{\OO(2k)}, t) = \prod_{i \leq k} \frac{1}{(1-t^{2i})^2},
\]
\[
H(S(W)^{\SO(2k)}, t) = \frac{1}{(1-t^{2k})(1-t^k)}\prod_{i \leq k-1} \frac{1}{(1-t^{2i})^2}.
\]
For $n = 2k+1$ we have
\[
H(S(W)^{\OO(2k+1)}, t) = \frac{1}{1-t^{n+1}}\prod_{i \leq k} \frac{1}{(1-t^{2i})^2},
\]
\[
H(S(W)^{\SO(2k+1)}, t) = \frac{1}{1-t^{k+1}}\prod_{i \leq k} \frac{1}{(1-t^{2i})^2}.
\]

It is shown in \cite{S} that $S(W)^{\SO(2k)}$ and $S(W)^{\SO(2k+1)}$ are polynomial algebras and, moreover,
that up to adding trivial summands $W$ is a maximal representation with this property.
(Schwarz calls such representations maximally coregular).
The algebra $S(W)^{\SO(2k)}$ is generated by $2k$ elements, two in each of the degrees $2,4, \ldots, 2(k-1)$
and two more elements -- one in degree $k$ and one in degree $2k$. The algebra $S(W)^{\SO(2k+1)}$ is generated by $2k+1$ elements -- two
in each of the degrees $2,4, \ldots, 2k$ and one generator in degree $k+1$. In more detail, if we use the fact that $S(V \oplus \Lambda^2V) = S(V) \otimes S(\Lambda^2V)$, the degrees of the generators are as follows: (see \cite{S})

For $S(W)^{\SO(2k)}$ we have
\[(0,2), (0,4), \dots, (0, 2k-2); (2,0), (2,2), (2,4), \dots, (2, 2k-2); (0,k).
\]

For $S(W)^{\SO(2k+1)}$ we have
\[(0,2), (0,4), \dots, (0, 2k); (2,0), (2,2), (2,4), \dots, (2, 2k-2); (1,k).
\]


For the polynomiality of $S(W)^{\Sp(2k)}$ we may use \cite{S} again, or we may show it directly as follows.
Corollary \ref{coro_Sp} and the equation (\ref{eq_decomp_VLambdaV}) imply that $S(W)^{\Sp(2k)} = S(\Lambda^2V)^{\Sp(2k)}$.
Therefore, $S(W)^{\Sp(2k)}$ is a polynomial algebra with $k$ generators in degrees $1,2, \ldots, k$.

\end{example}

Example \ref{ex_SVLambdaV} leads to the following corollaries.

\begin{corollary} Let $n = 2k+1$. Let $\{f_{2i}, g_{2i}\}_{i=1}^k \cup \{h\}$ be a generating set for $S(V\oplus \Lambda^2V)^{\SO(2k+1)}$ (where $\deg f_j = (0,j)$, $\deg g_j = (2,j-2)$, $\deg h = (1,k)$). Then $\{f_{2i}, g_{2i}\}_{i=1}^k \cup \{h^2\}$ is a generating set for $S(V\oplus \Lambda^2V)^{\OO(2k+1)}$.
\end{corollary}

\begin{proof}
We will use again that $\OO(2k+1)= \SO(2k+1) \times \{-\mathrm{Id}, \mathrm{Id}\}$. $-\mathrm{Id}$ acts on each generator of $S(V\oplus \Lambda^2V)^{\SO(2k+1)}$ by multiplication with $1$ or $-1$. Since $f_{2i}$ and $g_{2i}$ for all $i \leq k$ are homogeneous polynomials of even degree, it follows that $-\mathrm{Id}$ fixes them. The expressions for the Hilbert series from Example \ref{ex_SVLambdaV} show that $-\mathrm{Id}$ cannot fix $h$, hence $-\mathrm{Id}(h) = -h$, and the statement follows.
\end{proof}

\begin{corollary} Let $n=2k$. There exists a generating set $\{f_{2i}\}_{i=1}^{k-1} \cup \{g_{2i}\}_{i=1}^{k} \cup \{h\}$ of $S(V \oplus \Lambda^2V)^{\SO(2k)}$ (where $\deg f_j = (0,j)$, $\deg g_j = (2,j-2)$, $\deg h = (0,k)$) such that $\{f_{2i}\}_{i=1}^{k-1} \cup \{g_{2i}\}_{i=1}^{k} \cup \{h^2\}$ is a generating set for $S(V \oplus \Lambda^2V)^{\OO(2k)}$. 
\end{corollary}

\begin{proof}
Let $\{f_{2i}\}_{i=1}^{k-1} \cup \{g_{2i}\}_{i=1}^{k} \cup \{h\}$ be a generating set for $S(V \oplus \Lambda^2V)^{\SO(2k)}$. The case when $k$ is odd is the same as in the proof of Corollary \ref{coro_SLambda2V_O} and we skip it.

Let $k = 2j_0$ for some $j_0$. We will use again that $\OO(2k)$ is the semidirect product of $\SO(2k)$ and $\{R, \mathrm{Id}\}$, where $R$ is a reflection in $\CC^{2k}$. As in the proof of Corollary \ref{coro_SLambda2V_O}, for $i < j_0$ we have  $S^{2i}(V \oplus \Lambda^2V)^{\OO(2k)} = S^{2i}(V \oplus \Lambda^2V)^{\SO(2k)}$. We take then
$$
S^k(V \oplus \Lambda^2V)^{\SO(2k)} = \bigoplus_{s+t = j_0}(S^{2s}(V)\otimes S^{2t}(\Lambda^2V))^{\SO(2k)}.
$$

In $(S^2(V)\otimes S^{k-2}(\Lambda^2V))^{\SO(2k)}$ there is a one dimensional $\OO(2k)$-invariant subspace spanned by $g_k$, hence $g_k$ is $\OO(2k)$-invariant. In $(\CC\otimes S^k(\Lambda^2V))^{\SO(2k)}$ there is a two dimensional $\OO(2k)$-invariant subspace $U = \mathrm{span} \{f_k, h\}$. Therefore, there exist vectors $\tilde{f_k}$ and $\tilde{h}$, such that $R\tilde{f_k} = \tilde{f_k}$ and $R\tilde{h} = -\tilde{h}$. Similarly, for $i > j_0$ we have
$$
S^{2i}(V \oplus \Lambda^2V)^{\SO(2k)} = \bigoplus_{s+t = i}(S^{2s}(V)\otimes S^{2t}(\Lambda^2V))^{\SO(2k)}.
$$
In $(S^2(V)\otimes S^{2i-2}(\Lambda^2V))^{\SO(2k)}$ there is a two dimensional $\OO(2k)$-invariant subspace spanned by $\{g_{2i}, \tilde{h}\cdot g_{2i-2j_0}\}$ and therefore there is an $\OO(2k)$-invariant vector which we denote by $\tilde{g_{2i}}$. In $(\CC \otimes S^{2i}(\Lambda^2V))^{\SO(2k)}$ there is also a two dimensional $\OO(2k)$-invariant subspace spanned by $\{f_{2i}, \tilde{h}\cdot f_{2i-2j_0}\}$ and hence an $\OO(2k)$-invariant vector which we denote by $\tilde{f_{2i}}$ .

Thus, as in the proof of Corollary \ref{coro_SLambda2V_O}, we build a new generating set for $S(V \oplus \Lambda^2V)^{\SO(2k)}$ of the form
\[
\{f_{2i}\}_{i=1}^{j_0-1} \cup \{\tilde{f}_{2i}\}_{i=j_0}^{k-1} \cup \{g_{2i}\}_{i=1}^{j_0} \cup \{\tilde{g}_{2i}\}_{i=j_0+1}^{k}\cup \{\tilde{h}\}
\]
such that
\[
\{f_{2i}\}_{i=1}^{j_0-1} \cup \{\tilde{f}_{2i}\}_{i=j_0}^{k-1} \cup \{g_{2i}\}_{i=1}^{j_0} \cup \{\tilde{g}_{2i}\}_{i=j_0+1}^{k} \cup \{\tilde{h^2}\}
\]
is a generating set for $S(V \oplus \Lambda^2V)^{\OO(2k)}$.

\end{proof}

Even if the decomposition (\ref{eq_PolyAlgDecomp}) is not known, for fixed and not very big values of $n$ we can still determine
the Hilbert series of $S(W)^G$ using an algorithm developed in \cite{BBDGK}. This algorithm describes how from the Hilbert series (\ref{eq_HilbSeriesS}) to obtain the multiplicity series $M(H(S(W)), x_1, \dots, x_n,t)$ and $M'(H(S(W)),v_1, \dots, v_n, t)$.
Below we give several explicit examples.

\begin{example}
In this example we take $n = 2$ and $W = S^3V$. Then by \cite{BBDGK}
\begin{align*}
&M(H(S(S^3V)); x_1,x_2, t) = \frac{1 - x_1^2x_2t + x_1^4x_2^2t^2}{(1-x_1^3t)(1-x_1^2x_2t)(1-x_1^6x_2^6t^4)}, \\
&M'(H(S(S^3V)); v_1,v_2, t) = \frac{1 - v_1v_2t + v_1^2v_2^2t^2}{(1-v_1^3t)(1-v_1v_2t)(1-v_2^6t^4)}.
\end{align*}
Therefore, using Theorem \ref{thm_HilbertSeries} we obtain
\[
H(S(S^3V)^{\Sp(2)}, t) = \frac{1}{1-t^4},
\]
\[
H(S(S^3V)^{\OO(2)}, t) = \frac{1}{(1-t^2)^2(1-t^4)},
\]
\[
H(S(S^3V)^{\SO(2)}, t) = \frac{1+t^4}{(1-t^2)^2(1-t^4)}.
\]

The last expression shows that $S(S^3V)^{\SO(2)}$ is not a polynomial algebra.

Since $\Sp(2) = \SL(2)$, it is of course well-known that the algebra $S(S^3V)^{\Sp(2)}$ is a polynomial algebra in one variable
generated by the discriminant of cubic polynomials, i.e., there is a natural choice of the generator in this case.
\end{example}

\begin{example} Let again $n = 2$ and $W = S^4V$. Then in \cite{BBDGK} it is shown that
\[
M(H(S(S^4V)), x_1, x_2, t) = \frac{1-x_1^3x_2t + x_1^6x_2^2t^2}{(1-x_1^4t)(1-x_1^3x_2t)(1-x_1^4x_2^4t^2)(1-x_1^6x_2^6t^3)},
\]
\[
M'(H(S(S^4V)), v_1, v_2, t) = \frac{1-v_1^2v_2t + v_1^4v_2^2t^2}{(1-v_1^4)(1-v_12v_2t)(1-v_2^4t^2)(1-v_2^6t^3)}.
\]
Therefore,
\[
H(S(S^4V)^{\Sp(2)}, t) = \frac{1}{(1-t^2)(1-t^3)},
\]
\[
H(S(S^4V)^{\OO(2)}, t) = \frac{1}{(1-t)(1-t^2)^2(1-t^3)},
\]
\[
H(S(S^4V)^{\SO(2)}, t) = \frac{1+t^3}{(1-t)(1-t^2)^2(1-t^3)}.
\]
As in the previous example, the last expression shows that $S(S^4V)^{\SO(2)}$ is not a polynomial algebra.

It follows from \cite{Ka}, by using again the isomorphism $\Sp(2) = \SL(2)$, that $S(S^4V)^{\Sp(2)}$ is a polynomial algebra
generated by two generators in degrees $2$ and $3$, respectively.
\end{example}

\begin{example} Let $n =2$ and $W = S^2V \oplus S^2V$. Then, by \cite{BBDGK} we have
\[
M(H(S(W)), x_1, x_2, t) = \frac{1+x_1^3x_2t^2}{(1-x_1^2t)^2(1-x_1^2x_2^2t^2)^3};
\]
\[
M'(H(S(W)), v_1, v_2, t) = \frac{1+v_1^2v_2t^2}{(1-v_1^2t)^2(1-v_2^2t^2)^3}.
\]
Therefore,
\begin{align*}
&H(S(W)^{\Sp(2)}, t) = \frac{1}{(1-t^2)^3};\\
&H(S(W)^{\OO(2)}, t) = \frac{1}{(1-t)^2(1-t^2)^3};\\
&H(S(W)^{\SO(2)}, t) = \frac{1+t^2}{(1-t)^2(1-t^2)^3}.
\end{align*}
\end{example}

\begin{example} Let $n=2$ and $W = S^2V \oplus S^3V$. Then, using the formulas for the multiplicity series given in \cite{BBDGK} we obtain that
\begin{align*}
&H(S(W)^{\Sp(2)}, t) = \frac{1+t^7}{(1-t^2)(1-t^3)(1-t^4)(1-t^5)};\\
&H(S(W)^{\OO(2)}, t) = \frac{1+t+t^2+3t^3+2t^4+3t^5+3t^6+3t^7+2t^8+t^9+t^{10}+t^{11}}{(1-t^2)^3(1-t^3)(1-t^4)(1-t^5)};\\
&H(S(W)^{\SO(2)}, t) = \frac{1+t^2+3t^3+4t^4+4t^5+4t^6+3t^7+t^8+t^{10}}{(1-t)(1-t^2)^2(1-t^3)(1-t^4)(1-t^5)}.
\end{align*}
\end{example}

\begin{example} Let $n=2$ and $W = S^3V \oplus S^3V$. Then,
\begin{align*}
&H(S(W)^{\Sp(2)}, t) = \frac{1-t^2 + 2t^4 - t^6 + t^8}{(1-t^2)^2(1-t^4)^3};\\
&H(S(W)^{\OO(2)}, t) = \frac{1+ 2t^2 + 9t^4 + 12t^6 + 12t^8 + 2t^{10}}{(1-t^2)^4(1-t^4)^3};\\
&H(S(W)^{\SO(2)}, t) = \frac{1 + 4t^2 + 21t^4 + 24t^6 + 21t^8 + 4t^{10} + t^{12}}{(1-t^2)^4(1-t^4)^3}.
\end{align*}
\end{example}

\begin{example} Let $n = 3$ and $W = S^3V$. Then using again the formulas for the multiplicity series given in \cite{BBDGK} we obtain that
\[
H(S(S^3V)^{\OO(3)}, t) =\frac{(1+t^4)(1+t^6)(1+t^2+t^4+3t^6+5t^8+3t^{10}+t^{12}+t^{14}+t^{16})}{(1-t^2)(1-t^4)^3(1-t^6)^2(1-t^{10})}.
\]

For the Hilbert series of the algebra $S(S^3V)^{\SO(3)}$ we obtain using again \cite{BBDGK}
\[
H(S(S^3V)^{\SO(3)}, t) =
\frac{t^{14}+t^{13}-2t^{11}+t^9+5t^8+5t^7+5t^6+t^5-2t^3+t+1}{(1-t^3)^2(1-t^5)(1-t^2)^2(1-t^4)^2(1+t)}.
\]
\end{example}

\begin{example} Let again $n=3$ and $W = V^3_{(2,1,0)}$, i.e., $W$ is equal to the irreducible $\GL(3)$-module corresponding to the partition $(2,1,0)$. Then, using \cite{BBDGK} we obtain
\[
H(S(W)^{\OO(3)}, t) = \frac{1+t^4 + t^6 + t^8 + t^{10} + t^{14}}{(1-t^2)^2(1-t^6)^3};
\]
\[
H(S(W)^{\SO(3)}, t) = \frac{1+t^3 + t^4 + t^7 + t^8 + t^{11}}{(1-t^2)^2(1-t^3)(1-t^6)^2}.
\]
\end{example}

\section{The algebra of invariants $\Lambda(S^2V)^G$ for $G = \OO(n), \SO(n), \Sp(2k)$} \label{sec_S}

As a further application of our results and methods developed in Sections \ref{secBR_Sp}, \ref{secBR_OSO}, and \ref{sec_HilbSeries},
in this and the next section we determine the Hilbert series $H(\Lambda(S^2V)^G, t)$ and $H(\Lambda(\Lambda^2V)^G, t)$ for
$G = \OO(n)$, $\SO(n)$, or $\Sp(2k)$. Here again $V = \CC^n$ denotes the standard representation of $\GL(n)$.
Since the modules $\Lambda(S^2V)$ and $\Lambda(\Lambda^2V)$ are finite dimensional,
the Hilbert series of the respective algebras of invariants are polynomials.
We recall that the algebras $\Lambda(S^2V)^{\Sp(2k)}$ and $\Lambda(\Lambda^2V)^{\SO(n)}$ are classically known to be exterior algebras and the degrees of their generators are known. Furthermore, in \cite{I} the algebra $\Lambda(\Lambda^2V)^{\OO(n)}$ is described independently in terms of generators and relations and is shown that it is also isomorphic to an exterior algebra. In our paper, we offer another approach to computing the Hilbert series of $\Lambda(S^2V)^G$ and $\Lambda(\Lambda^2V)^G$ which works both for the known and for the unknown cases. 

For convenience, for the rest of the section we denote $W = \Lambda(S^2 V)$.
The decomposition of $W$ into irreducible $\GL(n)$-modules can be derived using, e.g.,
the formulas in \cite[the second edition, page 79, Example 9 (b)]{M}. We obtain
\[
W = \bigoplus_{\lambda} V_{\lambda}^n,
\]
where the sum runs over all partitions $\lambda = (\alpha_1 + 1, \ldots, \alpha_p +1 \vert \alpha_1, \ldots, \alpha_p)$
in the Frobenius notation with $\alpha_1 \leq n-1$.
If we take into account also the $\NN_0$-grading of $W$, given by the decompostion into irreducible components, we obtain
\[
W = \bigoplus_{i = 0}^{n(n+1)/2} \Lambda^i(S^2V) = \bigoplus_{i = 0}^{n(n+1)/2}  \bigoplus_{\vert\lambda\vert = 2i} V_{\lambda}^n,
\]
where again the sum is over all partitions $\lambda = (\alpha_1 + 1, \ldots, \alpha_p +1 \vert \alpha_1, \ldots, \alpha_p)$
in the Frobenius notation with $\alpha_1 \leq n-1$. Notice that the condition $\alpha_1 \leq n-1$ implies $i \leq n(n+1)/2$.

Using our results from Sections \ref{secBR_Sp}, \ref{secBR_OSO}, and \ref{sec_HilbSeries}, we obtain the following expressions.

\begin{align} \label{eq_HilbSeriesO}
&H(W^{\OO(n)}, t) = \sum_{i\geq 0} \left (\sum_{\substack{\lambda = (\alpha_1 + 1, \ldots, \alpha_p + 1 \vert \alpha_1, \ldots, \alpha_p) \\
\alpha_1 \leq n-1 \text{, } \vert\lambda\vert = 2i \\ \lambda- \text{an even partition}}} 1 \right) t^i;
\end{align}
\begin{align} \label{eq_HilbSeriesSO}
&H(W^{\SO(n)}, t)=\sum_{i\geq 0} \left(\sum_{\substack{\lambda = (\alpha_1 + 1, \ldots, \alpha_p + 1 \vert \alpha_1, \ldots, \alpha_p) \\
\alpha_1 \leq n-1 \text{, }\vert\lambda\vert = 2i \\ \lambda- \text{an even or an odd partition }}} 1 \right) t^i ;
\end{align}
\begin{align} \label{eq_HilbSeriesSp}
&H(W^{\Sp(2k)}, t) = \sum_{i \geq 0} \left(\sum_{\substack{\lambda = (\alpha_1 + 1, \ldots, \alpha_p + 1 \vert \alpha_1, \ldots, \alpha_p) \\
\alpha_1 \leq n-1 \text{, }\vert\lambda\vert = 2i \\ \lambda'- \text{an even partition}}} 1 \right) t^i ,
\end{align}
where $\lambda'$ denotes the transpose partition to $\lambda$.

We recall that if we have a partition $\lambda$ of the form $\lambda = (\alpha_1 + 1, \ldots, \alpha_p + 1 \vert \alpha_1, \ldots, \alpha_p)$
then necessarily $\alpha_1 > \alpha_2 > \cdots > \alpha_p \geq 0$. Therefore partitions of the type
$\lambda = (\alpha_1 + 1, \ldots, \alpha_p + 1 \vert \alpha_1, \ldots, \alpha_p)$ are in one-to-one correspondence with partitions of the form
$\alpha = (\alpha_1, \ldots, \alpha_p)$ with $p$ distinct parts. Moreover, $\vert\lambda\vert = 2\vert\alpha\vert + 2p$.

To determine the above three Hilbert series we fix one $p$, set $X = \{x_1, \ldots, x_p\}$ and define the polynomial
\[
H_p(X, t) = \sum_{i\geq 0} \left (\sum_{\substack{\alpha = (\alpha_1 > \cdots > \alpha_p) \\
\alpha_1 \leq n-1 \text{, }\vert\alpha\vert = i-p}} x_1^{\alpha_1 - (p-1)} x_2^{\alpha_2 - (p-2)} \cdots x_p^{\alpha_p} \right) t^i.
\]
The polynomial $H_p(X,t)$ is in some sence an analogue of the multiplicity series from Section \ref{sec_HilbSeries}.
Notice that since $\alpha$ is a partition with distinct parts, all exponents in the definition of $H_p(X,t)$ are non-negative integers.

We rewrite the above polynomial in the form
\[
H_p(X, t) = \sum_{i\geq 0} x_1^{-(p-1)}x_2^{-(p-2)}\cdots x_{p-1}^{-1} t^p\left (\sum_{\substack{\alpha = (\alpha_1 > \cdots > \alpha_p) \\
\alpha_1 \leq n-1 \text{, }\vert\alpha\vert = i-p}} x_1^{\alpha_1} x_2^{\alpha_2} \cdots x_p^{\alpha_p} \right) t^{i-p}.
\]
Then we set $u_1 = x_1t$, $u_2 = x_1x_2t^2$, \ldots, $u_p = x_1\cdots x_pt^p$ and obtain
\[
H_p(X, t) = \frac{t^{p(p+1)/2}}{u_1 \cdots u_{p-1}}\sum_{i\geq 0} \sum_{\substack{\alpha = (\alpha_1 > \cdots > \alpha_p) \\
\alpha_1 \leq n-1 \text{, }\vert\alpha\vert = i-p}} u_1^{\alpha_1 - \alpha_2} u_2^{\alpha_2-\alpha_3} \cdots u_{p-1}^{\alpha_{p-1} - \alpha_p} u_p^{\alpha_p}.
\]

Now we notice that the polynomial $H_p(X,t)$ is the $(n-p)$-th partial sum of the power series
\[
H_p^{\mathrm{inf}}(X,t) = \frac{t^{p(p+1)/2}}{u_1 \cdots u_{p-1}}\sum_{i\geq 0} \sum_{\substack{\alpha = (\alpha_1 > \cdots > \alpha_p) \\
\vert\alpha\vert = i-p}} u_1^{\alpha_1 - \alpha_2} u_2^{\alpha_2-\alpha_3} \cdots u_{p-1}^{\alpha_{p-1} - \alpha_p} u_p^{\alpha_p}.
\]
For $H_p^{\mathrm{inf}}(X,t)$ we obtain
after some computations
\[
H_p^{\mathrm{inf}}(X, t) = t^{\frac{p(p+1)}{2}} \prod_{k=1}^{p}\frac{1}{1-u_k}.
\]
Using the change of variables $v_1 = x_1$, $v_2 = x_1x_2$, \ldots, $v_p = x_1\cdots x_p$ we have
\[
H_p(X,t)= H_p'(V,t) = \sum_{i\geq 0} \left (\sum_{\substack{\alpha = (\alpha_1 > \cdots > \alpha_p) \\
\alpha_1 \leq n-1 \text{, } \vert\alpha\vert = i-p}}
v_1^{\alpha_1 - \alpha_2-1} v_2^{\alpha_2 - \alpha_3 -1} \cdots v_{p-1}^{\alpha_{p-1} - \alpha_p -1} v_p^{\alpha_p} \right) t^i
\]
and
\[
H_p^{\mathrm{inf}}(X,t)= (H_p')^{\mathrm{inf}}(V,t) = \sum_{i\geq 0} \left (\sum_{\substack{\alpha = (\alpha_1 > \cdots > \alpha_p) \\
\vert\alpha\vert = i-p}}
v_1^{\alpha_1 - \alpha_2-1} v_2^{\alpha_2 - \alpha_3 -1} \cdots v_{p-1}^{\alpha_{p-1} - \alpha_p -1} v_p^{\alpha_p} \right) t^i.
\]
Hence,
\begin{equation} \label{eq_Hp_inf}
(H_p)'^{\mathrm{inf}}(V, t) = t^{\frac{p(p+1)}{2}} \prod_{k=1}^{p}\frac{1}{1-v_kt^k}.
\end{equation}

As we mentioned above, the polynomial $H_p(X,t)$ consists of all terms from $H_p^{\mathrm{inf}}(X,t)$
of the form $t^{\frac{p(p+1)}{2}}u_1^{a_1}\cdots u_p^{a_p}$ such that $a_1 + \cdots + a_p \leq n-p$.
Therefore, using (\ref{eq_Hp_inf}), we obtain that the polynomial $H_p'(V,t)$ consists of all terms
from $(H_p')^{\mathrm{inf}}(V,t)$ of the form $t^{\frac{p(p+1)}{2}}v_1^{a_1} \cdots v_p^{a_p} t^{a_1 + 2a_2 + \cdots + pa_p}$
with $a_1 + \cdots +a_p \leq n-p$. In other words,
\begin{equation} \label{eq_Hp}
H_p'^(V, t) = \sum_{a_1 + \cdots + a_p \leq n-p} v_1^{a_1} \cdots v_p^{a_p} t^{ \frac{p(p+1)}{2} + a_1 + 2a_2 + \cdots + pa_p}.
\end{equation}

Next, we come back to determining the Hilbert series $H(W^{\OO(n)}, t)$. We have the following proposition.

\begin{proposition}
\begin{align*}
H(W^{\OO(n)}, t) =1 + t + &\sum_{\substack{ p=2 \\ p - \text{\rm even}}}^n t^{\frac{p(p+1)}{2}}
\left (\sum_{\substack{a_1 + \cdots + a_{\frac{p}{2}} \leq n-p \\ a_1, \ldots, a_{\frac{p-2}{2}}
- \text{\rm even, } a_{\frac{p}{2}} - \text{\rm odd}}} t^{2a_1 + 4a_2 + \cdots + pa_{\frac{p}{2}}} \right )  \\
+&\sum_{\substack{ p=3 \\
p - \text{\rm odd}}}^n t^{\frac{p(p+1)}{2}} \left(\sum_{\substack{a_1 + \cdots + a_{\frac{p-1}{2}} \leq n-p \\
a_i - \text{\rm even}}} t^{2a_1 + 4a_2 + \cdots + (p-1)a_{\frac{p-1}{2}}}\right).
\end{align*}
\end{proposition}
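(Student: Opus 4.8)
The plan is to begin from the expression (\ref{eq_HilbSeriesO}) for $H(W^{\OO(n)},t)$ and to organize the outer sum according to the side $p$ of the Durfee square of $\lambda$, that is, according to the number of parts of the strict partition $\alpha=(\alpha_1>\cdots>\alpha_p\geq 0)$ attached to $\lambda=(\alpha_1+1,\ldots,\alpha_p+1\mid\alpha_1,\ldots,\alpha_p)$. The contributions of $p=0$ and $p=1$ I would dispose of by hand: $p=0$ gives the empty partition, which is even, and the term $1$, while for $p=1$ the shape is the hook $\lambda=(\alpha_1+2,1^{\alpha_1})$, which is an even partition only when $\alpha_1=0$, giving $\lambda=(2)$ and the term $t$. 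This produces the leading $1+t$, and for each fixed $p\geq 2$ I would compute the corresponding inner sum by specializing the generating polynomial $H_p'(V,t)$ of (\ref{eq_Hp}), whose exponents are $a_j=\alpha_j-\alpha_{j+1}-1$ for $j<p$ and $a_p=\alpha_p$.

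The heart of the argument is to convert the condition ``$\lambda$ is an even partition'' into explicit restrictions on the exponents $a_1,\ldots,a_p$. Here I would use the characterization that $\lambda$ is even if and only if every part of $\lambda'$ has even multiplicity, equivalently $\lambda'_{2s-1}=\lambda'_{2s}$ for all $s$. Recalling that the first $p$ columns have heights $\lambda'_j=\alpha_j+j$ and that the columns beyond the Durfee square are controlled by the rows $\lambda_i=\alpha_i+i+1$ (for $i\leq p$), I would analyze the pairing of columns in three regimes: pairs lying strictly inside the Durfee square, the pair straddling its boundary, and pairs lying beyond it. A pair $(2s-1,2s)$ inside the square forces $\alpha_{2s-1}+(2s-1)=\alpha_{2s}+2s$, that is $\alpha_{2s-1}=\alpha_{2s}+1$, so all odd-indexed exponents vanish: $a_1=a_3=\cdots=0$. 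The beyond-Durfee pairs force every diagonal row $\lambda_1,\ldots,\lambda_p$ to be even, which, once $a_{\mathrm{odd}}=0$, reduces to the requirement that $\alpha_{2s}$ be odd for every $s$.

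The dichotomy between even and odd $p$ in the statement comes precisely from the fate of the last diagonal column $\lambda'_p$. When $p$ is even this column is paired \emph{inside} the square with column $p-1$, so no further condition on $\alpha_p$ arises; the surviving free exponents are $a_2,a_4,\ldots,a_p$, of which $a_2,\ldots,a_{p-2}$ come out even and $a_p=\alpha_p$ comes out odd. When $p$ is odd the column $\lambda'_p$ must instead pair with the first column beyond the square, and since all diagonal rows exceed $p$ (because $\alpha_i\geq p-i$, forcing $\lambda'_{p+1}=p$), the straddling pair imposes $\alpha_p+p=p$, i.e. $a_p=\alpha_p=0$; the surviving free exponents are then $a_2,a_4,\ldots,a_{p-1}$, all even. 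In both cases I would relabel the even-indexed exponents $a_{2j}=b_j$ and read off from (\ref{eq_Hp}) that the $t$-degree of the corresponding monomial is $\tfrac{p(p+1)}{2}+\sum_j 2j\,b_j$, while the admissibility bound $\alpha_1\leq n-1$ becomes $\sum_j b_j\leq n-p$. Setting all $v_j=1$ and summing over $p\geq 2$ then reproduces, term by term, the two inner sums in the proposition.

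The main obstacle is the boundary analysis in the two previous paragraphs. The equivalence between evenness of $\lambda$ and the column pairing $\lambda'_{2s-1}=\lambda'_{2s}$ must be applied with care exactly at the edge of the Durfee square, because it is the parity of $p$ that decides whether the last diagonal column is matched internally or against the region outside the square, and this is what splits the final formula into its even-$p$ and odd-$p$ pieces. Checking the inequalities $\alpha_i\geq p-i$, which guarantee that all $p$ diagonal rows have length exceeding $p$ so that ``beyond-Durfee pairing'' is literally the statement that these rows are even, is the technical point that makes the case distinction clean. Once the constraints on the $a_j$ are established, the remainder is the routine bookkeeping of substituting them into (\ref{eq_Hp}) and summing over $p$.
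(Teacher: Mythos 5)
Your proof is correct and follows essentially the same route as the paper's: both organize the count by the Frobenius rank $p$, reduce to a characterization of when $\lambda=(\alpha_1+1,\ldots,\alpha_p+1\mid\alpha_1,\ldots,\alpha_p)$ is an even partition (your constraints --- vanishing odd-indexed $a_j$, parity conditions on the even-indexed ones, and $\alpha_p=0$ when $p$ is odd --- coincide with the conditions stated in the paper), and then read off the surviving terms of $H_p'(V,t)$ from (\ref{eq_Hp}), treating $p=0,1$ by hand to get $1+t$. The only differences are presentational: the paper enforces the constraints formally by evaluating $H_p'$ at points with zero entries and iterated averaging over $\pm1$ substitutions, whereas you select the admissible monomials directly, and you additionally supply a derivation (via conjugate-column pairing across the Durfee square) of the evenness characterization that the paper asserts without proof.
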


\begin{proof}
A partition $\lambda = (\alpha_1 + 1, \ldots, \alpha_p + 1 \vert \alpha_1, \ldots, \alpha_p)$ is even if and only if either the following three conditions hold
\begin{itemize}
\item $p$ is even;
\item $\alpha_1, \alpha_3, \ldots, \alpha_{p-1}$ are even;
\item $\alpha_1 - \alpha_2 = 1, \alpha_3 - \alpha_4 = 1, \ldots, \alpha_{p-1} - \alpha_p = 1$.
\end{itemize}
or
\begin{itemize}
\item $p$ is odd;
\item $\alpha_1, \alpha_3, \ldots, \alpha_{p-1}$ are even and $\alpha_p = 0$;
\item $\alpha_1 - \alpha_2 = 1, \alpha_3 - \alpha_4 = 1, \ldots, \alpha_{p-2} - \alpha_{p-1} = 1$.
\end{itemize}
Therefore (\ref{eq_HilbSeriesO}) implies
\[
H(W^{\OO(n)}, t) = \sum_{i\geq 0} \left (\sum_{\substack{p = 0 }}^n
\sum_{\substack{\lambda = (\alpha_1 + 1, \ldots, \alpha_p + 1 \vert \alpha_1, \ldots, \alpha_p) \\
\alpha_1 \leq n-1 \text{, }\vert\lambda\vert = 2i \\ \lambda \text{is an even partition}}} 1 \right ) t^i
= \sum_{i\geq 0} \left (\sum_{\substack{p = 0}}^n\sum_{\substack{\alpha_1 \leq n-1 \text{, }\vert\alpha\vert = i-p }} 1 \right ) t^i,
\]
where the last sum runs over partitions $\alpha = (\alpha_1, \ldots, \alpha_p)$ with distinct parts and such that the above conditions hold.

We rewrite the above as
\[
\sum_{i\geq 0} \left (\sum_{\substack{p = 0}}^n\sum_{\substack{\vert\alpha\vert = i-p \\
\alpha_1 \leq n-1} } 1 \right ) t^i= \sum_{\substack{p = 0}}^n \sum_{i \geq 0} \sum_{\substack{\vert\alpha\vert = i-p \\
\alpha_1 \leq n-1}} t^i.
\]

Next, we fix one even and non-zero $p$ and consider the polynomial $H_p'(V,t)$. We notice that the monomial
$v_1^{\alpha_1 - \alpha_2-1} v_2^{\alpha_2 - \alpha_3 -1} \cdots v_{p-1}^{\alpha_{p-1} - \alpha_p -1} v_p^{\alpha_p}$ evaluated at the point
$(0, v_2, 0, v_4, \ldots, 0, v_p)$ is non-zero if and only if $\alpha_1 - \alpha_2 = 1$, $\alpha_3 - \alpha_4 = 1$, $\ldots$, $\alpha_{p-1} - \alpha_p = 1$.
Therefore we set
\begin{align*}
&M_p(v_2, v_4, \ldots, v_p, t) = H'_p(0, v_2, 0, v_4, \ldots, 0, v_p, t)  \\
&=\sum_{i\geq 0} \left (\sum_{\substack{\alpha = (\alpha_1 > \cdots > \alpha_p) \\
\alpha_1 \leq n-1 \text{, }\vert\alpha\vert = i-p \\
\alpha_1 - \alpha_2 = 1, \alpha_3 - \alpha_4 = 1, \ldots, \alpha_{p-1} - \alpha_p = 1}}
v_2^{\alpha_1 - \alpha_3-2} v_4^{\alpha_3 - \alpha_5 - 2}\cdots v_{p-2}^{\alpha_{p-3} - \alpha_{p -1} - 2} v_p^{\alpha_{p-1} - 1} \right) t^i.
\end{align*}
Next, $\alpha_1$, $\alpha_3$, $\ldots$, $\alpha_{p-1}$ are even numbers if and only if all exponents in the above expression
except the last one are even numbers and $\alpha_{p-1} - 1$ is odd. Therefore we can define iteratively
\begin{align*}
&M_p^{(1)}(v_4, \ldots, v_p, t) = \frac{1}{2}(M_p(1, v_4, \ldots, v_p, t) + M_p(-1, v_4, \ldots, v_p, t)) \\
&\ldots \ldots\\
&M_p^{(p/2-1 )}(v_p, t) = \frac{1}{2}(M_p^{(p/2-2)} (1, v_p, t) + M_p^{(p/2 -2)} (-1, v_p, t)).
\end{align*}
Finally, we define
\[
M_p^{(p/2)}(t) = \frac{1}{2}(M_p^{(p/2-1)} (1, t) - M_p^{(p/2 -1)} (-1,t)).
\]


The next step is to consider the case when $p$ is an odd number and $p > 1$.
Then the monomial $v_1^{\alpha_1 - \alpha_2-1} v_2^{\alpha_2 - \alpha_3 -1} \cdots v_{p-1}^{\alpha_{p-1} - \alpha_p -1} v_p^{\alpha_p}$ evaluated at the point
$(0, v_2, 0, v_4, \ldots, 0, v_{p-1}, 0)$ is non-zero if and only if $\alpha_1 - \alpha_2 = 1$, $\alpha_3 - \alpha_4 = 1$,
$\ldots$, $\alpha_{p-2} - \alpha_{p-1} = 1$ and $\alpha_p = 0$. Therefore we set
\begin{align*}
&N_p(v_2, v_4, \ldots, v_{p-1}, t) = H'_p(0, v_2, 0, v_4, \ldots, 0, v_{p-1}, 0, t)  \\
&=\sum_{i\geq 0} \left (\sum_{\substack{\alpha = (\alpha_1 > \cdots > \alpha_p) \\
\alpha_1 \leq n-1 \text{, }\vert\alpha\vert = i-p \\
\alpha_1 - \alpha_2 = 1, \alpha_3 - \alpha_4 = 1, \ldots, \alpha_{p-2} - \alpha_{p-1} = 1, \alpha_p = 0}}
v_2^{\alpha_1 - \alpha_3-2} v_4^{\alpha_3 - \alpha_5-2}\cdots v_{p-1}^{\alpha_{p-2} - \alpha_{p} -2} \right) t^i.
\end{align*}
We notice again that $\alpha_1$, $\alpha_3$, $\ldots$, $\alpha_{p}$ are even numbers
if and only if all exponents in the above expression are even numbers. Hence, similarly to the previous case we can define iteratively
\begin{align*}
&N_p^{(1)}(v_4, \ldots, v_{p-1}, t) = \frac{1}{2}(N_p(1, v_4, \ldots, v_{p-1}, t) + N_p(-1, v_4, \ldots, v_{p-1}, t)) \\
&\ldots \ldots\\
&N_p^{(\frac{p-1}{2})}(t) = \frac{1}{2}(N_p^{(\frac{p-1}{2}-1)} (1, t) + N_p^{(\frac{p-1}{2} -1)} (-1,t)).
\end{align*}


Finally, we consider the cases $p=0$ and $p=1$. For $p = 0$ we set $H'_0(V,t) = 1$. For $p=1$ we have
\[
H'_1(v_1,t) = \sum_{\alpha_1 = 0}^{n-1} v_1^{\alpha_1} t^{\alpha_1 + 1}.
\]
Since the partition $\lambda = (\alpha_1 + 1 \vert \alpha_1)$ is even if and only if $\alpha_1 = 0$,
we evaluate $H'_1(v_1,t)$ at the point $(0, t)$ and obtain $H'_1(0,t) = t$.


The statement of the proposition follows now from (\ref{eq_Hp}) and the observation that
\[
H(W^{\OO(n)}, t) = H'_0(V,t) + H'_1(0,t) + \sum_{\substack{p = 2 \\ p - \text{even}}}^n M_p^{(p/2)}(t)
+ \sum_{\substack{p = 3 \\ p - \text{odd}}}^n N_p^{(\frac{p-1}{2})}(t).
\]
\end{proof}

We determine the Hilbert series $H(W^{\Sp(2k)}, t)$ and $H(W^{\SO(n)}, t)$ in a similar way
by using respectively (\ref{eq_HilbSeriesSp}) and (\ref{eq_HilbSeriesSO}).
To determine $H(W^{\Sp(2k)}, t)$ we notice that for a partition $\lambda = (\alpha_1 +1, \ldots,\alpha_p +1 \vert \alpha_1, \ldots, \alpha_p)$
the transpose $\lambda'$ is even if and only if
\begin{itemize}
\item $p$ is even;
\item $\alpha_1, \alpha_3, \ldots, \alpha_{p-1}$ are odd numbers;
\item $\alpha_1 - \alpha_2 = 1, \alpha_3 - \alpha_4 = 1, \ldots, \alpha_{p-1} - \alpha_p = 1$.
\end{itemize}
Therefore, by fixing $p$ even and evaluating the series $H'_p(V,t)$ at well-chosen points we obtain:

\begin{proposition} \label{prop_HilbSeries_S_Sp}
Let $n = 2k$. Then
\[
H(W^{\Sp(2k)}, t) = 1 + \sum_{\substack{ p=2 \\ p - \text{\rm even}}}^n t^{\frac{p(p+1)}{2}}
\left (\sum_{\substack{a_1 + \cdots + a_{\frac{p}{2}} \leq n-p \\
a_i - \text{\rm even}}} t^{2a_1 + 4a_2 + \cdots + pa_{\frac{p}{2}}} \right ).
\]
\end{proposition}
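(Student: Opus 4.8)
The plan is to mirror the proof of the preceding proposition (for $\OO(n)$) almost line for line, replacing the characterization of even partitions used there by the characterization of transpose-even partitions recalled immediately above. Starting from (\ref{eq_HilbSeriesSp}), I will split the sum according to the number $p$ of Frobenius coordinates of $\lambda=(\alpha_1+1,\ldots,\alpha_p+1\vert\alpha_1,\ldots,\alpha_p)$. Since the very first of the three conditions characterizing $\lambda'$ even is that $p$ be even, only $p=0$ and even $p\geq 2$ can contribute; this is what removes the odd-$p$ branch present in the $\OO(n)$ computation. The case $p=0$ is the empty partition and gives $H_0'(V,t)=1$, producing the leading summand $1$.

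For a fixed even $p\geq 2$ I will work with the series $H_p'(V,t)$. The relations $\alpha_1-\alpha_2=1,\ \alpha_3-\alpha_4=1,\ \ldots,\ \alpha_{p-1}-\alpha_p=1$ say exactly that the exponents of $v_1,v_3,\ldots,v_{p-1}$ vanish, so I evaluate $H_p'$ at $(0,v_2,0,v_4,\ldots,0,v_p)$. As in the preceding proof this yields a series $M_p(v_2,v_4,\ldots,v_p,t)$ whose monomials carry the exponents $\alpha_1-\alpha_3-2,\ \alpha_3-\alpha_5-2,\ \ldots,\ \alpha_{p-3}-\alpha_{p-1}-2$ on $v_2,\ldots,v_{p-2}$ and $\alpha_{p-1}-1$ on $v_p$.

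The only substantive difference from the $\OO(n)$ argument is the parity bookkeeping. Here I must impose that $\alpha_1,\alpha_3,\ldots,\alpha_{p-1}$ be odd (rather than even), and I will verify that, in the presence of the difference-one relations already imposed, this is equivalent to demanding that every one of the exponents above --- the final exponent $\alpha_{p-1}-1$ included --- be even. Therefore each successive parity extraction is the even-projection $\frac{1}{2}\bigl(f(1)+f(-1)\bigr)$, applied to $v_2,v_4,\ldots,v_p$ in turn, with a plus sign at every stage and in particular at the last one; this is exactly where the sign pattern diverges from the $\OO(n)$ case, whose final step used $\frac{1}{2}\bigl(f(1)-f(-1)\bigr)$. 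Substituting the closed form (\ref{eq_Hp}) and renaming the $p/2$ surviving exponents as $a_1,\ldots,a_{p/2}$ then gives, for each even $p$, the inner contribution $t^{p(p+1)/2}\sum t^{2a_1+4a_2+\cdots+pa_{p/2}}$ with the sum taken over even $a_i$ satisfying $a_1+\cdots+a_{p/2}\leq n-p$; summing over even $p$ and adjoining the $p=0$ term yields the asserted formula.

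I expect the heart of the argument to be precisely this parity translation: checking that imposing oddness of $\alpha_1,\alpha_3,\ldots,\alpha_{p-1}$ renders all exponents of $M_p$ even, so that no odd-degree term survives and every extraction carries a plus sign. Once this is confirmed, everything else is a routine transcription of the even-$p$ portion of the $\OO(n)$ proof, with the final sign reversed and the odd-$p$ branch simply omitted.
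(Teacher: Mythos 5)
Your proposal is correct and follows exactly the route the paper (briefly) sketches: split off $p=0$, use the stated characterization of transpose-even partitions to restrict to even $p$, evaluate $H_p'(V,t)$ at $(0,v_2,0,v_4,\ldots,0,v_p)$, and extract even exponents with the projection $\frac{1}{2}\bigl(f(1)+f(-1)\bigr)$ at every stage. Your key parity verification is right: with the difference-one relations imposed, oddness of $\alpha_1,\alpha_3,\ldots,\alpha_{p-1}$ is equivalent to all surviving exponents, including $\alpha_{p-1}-1$, being even, which is precisely why the final extraction carries a plus sign here, unlike in the $\OO(n)$ case.
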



It remains to determine the Hilbert series $H(W^{\SO(n)}, t)$.
First we notice that a partition $\lambda = (\alpha_1 +1, \ldots,\alpha_p +1 \vert \alpha_1, \ldots, \alpha_p)$
is odd if and only if either the following four conditions hold
\begin{itemize}
\item $n$ is even;
\item $p$ is odd;
\item $\alpha_1, \alpha_3, \ldots, \alpha_{p-2}, \alpha_p$ are odd numbers and $\alpha_1 = n-1$;
\item $\alpha_2 - \alpha_3 = 1, \alpha_4 - \alpha_5 = 1, \ldots, \alpha_{p-1} - \alpha_p = 1$.
\end{itemize}
 or
\begin{itemize}
\item $n$ is even;
\item $p$ is even;
\item $\alpha_1, \alpha_3, \ldots, \alpha_{p-1}$ are odd numbers and $\alpha_1 = n-1$;
\item $\alpha_2 - \alpha_3 = 1, \alpha_4 - \alpha_5 = 1, \ldots, \alpha_{p-2} - \alpha_{p-1} = 1$ and $\alpha_p = 0$.
\end{itemize}
Then, we fix $p$ and evaluate the series $H'_p(V,t)$ from (\ref{eq_Hp}) at well-chosen points to obtain:
\begin{proposition}
\begin{itemize}
\item [(i)] Let $n = 2k+1$. Then
\[
H(W^{\SO(n)}, t) = H(W^{\OO(n)}, t).
\]
\item[(ii)] Let $n = 2k$. Then
\begin{align*}
H(W^{\SO(n)}, t) =1 + t + &\sum_{\substack{ p=2 \\ p - \text{\rm even}}}^n t^{\frac{p(p+1)}{2}}
\left (\sum_{\substack{a_1 + \cdots + a_{\frac{p}{2}} \leq n-p \\ a_1, \ldots, a_{\frac{p-2}{2}} - \text{\rm even, }
a_{\frac{p}{2}} - \text{\rm odd}}} t^{2a_1 + 4a_2 + \cdots + pa_{\frac{p}{2}}} \right ) \\
+&\sum_{\substack{ p=3 \\ p - \text{\rm odd}}}^n t^{\frac{p(p+1)}{2}} \left(\sum_{\substack{a_1 + \cdots + a_{\frac{p-1}{2}} \leq n-p \\
a_i - \text{\rm even}}} t^{2a_1 + 4a_2 + \cdots + (p-1)a_{\frac{p-1}{2}}} \right) \\
+&\sum_{\substack{ p=1 \\ p - \text{odd}}}^n t^{\frac{p(p+1)}{2}} \left (\sum_{\substack{a_1 + \cdots + a_{\frac{p+1}{2}} = n-p \\
a_1, \ldots, a_{\frac{p-1}{2}} - \text{\rm even, } a_{\frac{p+1}{2}} - \text{\rm odd}}} t^{a_1 + 3a_2 + \cdots + pa_{\frac{p+1}{2}}} \right) \\
+&\sum_{\substack{ p=2 \\ p - \text{\rm even}}}^n t^{\frac{p(p+1)}{2}} \left(\sum_{\substack{a_1 + \cdots + a_{\frac{p}{2}} = n-p \\
a_i - \text{\rm even}}} t^{a_1 + 3a_2 + \cdots + (p-1)a_{\frac{p}{2}}} \right).
\end{align*}
\end{itemize}
\end{proposition}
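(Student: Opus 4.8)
The plan is to reduce the whole statement to the already–established formula for $H(W^{\OO(n)},t)$ together with a count of the \emph{odd} partitions. By Corollary~\ref{coro_SO} and the multiplicity–free decomposition of $W=\Lambda(S^2V)$ underlying (\ref{eq_HilbSeriesSO}), the coefficient of $t^i$ in $H(W^{\SO(n)},t)$ is the number of Frobenius partitions $\lambda=(\alpha_1+1,\dots,\alpha_p+1\mid\alpha_1,\dots,\alpha_p)$ with $\alpha_1\le n-1$ and $|\lambda|=2i$ that are even \emph{or} odd. The even ones are exactly those counted by $H(W^{\OO(n)},t)$ (Corollary~\ref{coro_O} and the preceding proposition computing $H(W^{\OO(n)},t)$). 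A nonempty partition cannot be both even and odd, and the empty partition, having fewer than $n$ parts, is even but not odd; hence the two families are disjoint and
\[
H(W^{\SO(n)},t)=H(W^{\OO(n)},t)+\Theta_n(t),
\]
where $\Theta_n(t)$ enumerates the odd $\lambda$ of the above form. The key structural observation, from the proof of Corollary~\ref{coro_SO}, is that an odd partition equals $(2\delta_1+1,\dots,2\delta_n+1)$ and therefore has \emph{exactly} $n$ positive parts; since the first column of $\lambda$ has height $\alpha_1+1$, being odd forces $\alpha_1=n-1$.

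For part~(i) I would show $\Theta_n(t)=0$ when $n$ is odd. If $\lambda$ is odd then $\alpha_1=n-1$, so its first part is $\lambda_1=\alpha_1+2=n+1$; an odd partition has $\lambda_1$ odd, which requires $n$ even. For $n=2k+1$ this is impossible, no odd partition occurs, and $H(W^{\SO(n)},t)=H(W^{\OO(n)},t)$.

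For part~(ii), with $n=2k$, I would first translate ``$\lambda$ is odd'' into explicit conditions on the $\alpha_i$, mirroring the even case of the preceding proposition. Writing the parts of $\lambda$ in Frobenius coordinates and demanding that each be odd produces the two sets of conditions stated just before the proposition: $\alpha_1=n-1$, the alternating gap relations $\alpha_2-\alpha_3=1,\ \alpha_4-\alpha_5=1,\dots$, and the parity requirements, split according to the parity of $p$. Because $\alpha_1=n-1$ is now fixed rather than merely bounded, the constraint on the shifted variables of (\ref{eq_Hp}) tightens from $a_1+\cdots+a_{\lceil p/2\rceil}\le n-p$ to the equality $a_1+\cdots+a_{\lceil p/2\rceil}=n-p$, which is exactly why the two new sums range over tuples with $a_1+\cdots+a_{\lceil p/2\rceil}=n-p$. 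I would then extract $\Theta_n(t)$ from the polynomials $H'_p(V,t)$ of (\ref{eq_Hp}) by the same averaging device as for $\OO(n)$: setting the appropriate alternate variables to $0$ enforces the gap relations $\alpha_{2j}-\alpha_{2j+1}=1$, and forming the half–sums $\tfrac12\big(H'_p(\dots,1,\dots)+H'_p(\dots,-1,\dots)\big)$ over the remaining variables selects the required even exponents. Summing the contributions of odd $p$ and of even $p$ yields precisely the third and fourth displayed sums, and adding $H(W^{\OO(n)},t)$ gives the asserted formula.

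The main obstacle is the bookkeeping in part~(ii). In the odd case the difference–one relations fall on the even–indexed gaps and the parity conditions on the odd–indexed $\alpha_i$ (with the extra pin $\alpha_1=n-1$), which is the mirror image of the even case; consequently the substitution points and the variables over which one averages are shifted relative to the $\OO(n)$ computation. One must check that these evaluations of (\ref{eq_Hp}) reproduce the stated exponents $a_1+3a_2+\cdots$ with the correct parities. By comparison, part~(i) and the reduction to $\Theta_n(t)$ are immediate.
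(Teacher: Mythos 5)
Your proposal is correct and follows essentially the same route as the paper: split the $\SO(n)$ count into even partitions (giving $H(W^{\OO(n)},t)$) plus odd ones, note that an odd partition forces $\alpha_1=n-1$ and hence $n$ even (which gives (i)), and for $n=2k$ evaluate $H'_p(V,t)$ from (\ref{eq_Hp}) at well-chosen points --- zeros at the even-indexed variables to enforce the gap relations, $\pm 1$ averaging for the parities --- so that fixing $\alpha_1=n-1$ tightens $a_1+\cdots+a_p\le n-p$ to equality and produces the two extra sums. The only detail to watch is that the variable required to have \emph{odd} exponent must be extracted by the half-difference $\tfrac12\bigl(f(1)-f(-1)\bigr)$ rather than the half-sum, exactly as in the final step of the paper's $\OO(n)$ computation.
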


\section{The algebra of invariants $\Lambda(\Lambda^2 V)^G$ for $G = \OO(n), \SO(n), \Sp(2k)$} \label{sec_Lambda}

In this section we set $W = \Lambda(\Lambda^2 V)$. The approach for computing the Hilbert series $H(\Lambda(\Lambda^2V)^G, t)$
is very similar to the one in the previous section and we shall only sketch the proofs.


The decomposition of $W$ into irreducible $\GL(n)$-modules can again be determined using, e.g., the formulas from
\cite[pages 78-79, Example 9 (a)]{M}. The exact formula is
\[
W = \bigoplus_{\lambda} V^n_{\lambda} = \bigoplus_{i =0}^{n(n-1)/2} \bigoplus_{\vert\lambda\vert = 2i} V^n_{\lambda},
\]
where the sum runs over all partitions $\lambda = (\alpha_1 - 1, \ldots, \alpha_p - 1 \vert \alpha_1, \ldots, \alpha_p)$
in the Frobenius notation and $\alpha_1 \leq n -1$. Then for the Hilbert series of the respective algebras of invariants we obtain the following expressions:

\begin{align*} 
&H(W^{\OO(n)}, t) = \sum_{i\geq 0} \left (\sum_{\substack{\lambda = (\alpha_1 - 1, \ldots, \alpha_p - 1 \vert \alpha_1, \ldots, \alpha_p) \\
\alpha_1 \leq n-1 \text{, } \vert\lambda\vert = 2i \\ \lambda- \text{an even partition}}} 1 \right) t^i;
\end{align*}
\begin{align*} 
&H(W^{\SO(n)}, t)=\sum_{i\geq 0} \left(\sum_{\substack{\lambda = (\alpha_1 - 1, \ldots, \alpha_p - 1 \vert \alpha_1, \ldots, \alpha_p) \\
\alpha_1 \leq n-1 \text{, }\vert\lambda\vert = 2i \\ \lambda- \text{an even or an odd partition }}} 1 \right) t^i ;
\end{align*}
\begin{align*} 
&H(W^{\Sp(2k)}, t) = \sum_{i \geq 0} \left(\sum_{\substack{\lambda = (\alpha_1 - 1, \ldots, \alpha_p - 1 \vert \alpha_1, \ldots, \alpha_p) \\
\alpha_1 \leq n-1 \text{, }\vert\lambda\vert = 2i \\ \lambda'- \text{an even partition}}} 1 \right) t^i ,
\end{align*}
where $\lambda'$ denotes the transpose partition to $\lambda$.

We notice that partitions of the type
$\lambda = (\alpha_1 - 1, \ldots, \alpha_p - 1 \vert \alpha_1, \ldots, \alpha_p)$ are in one-to-one correspondence with partitions of the form
$\alpha = (\alpha_1, \ldots, \alpha_p)$ with $p$ distinct positive parts. Moreover $\vert\lambda\vert = 2\vert\alpha\vert$.

As in the previous section we fix $p$, set $X = \{x_1, \ldots, x_p\}$ and define the following analogue of the multiplicity series
\[
H_p(X, t) = \sum_{i\geq 0} \left (\sum_{\substack{\alpha = (\alpha_1 > \cdots > \alpha_p > 0) \\
\alpha_1 \leq n-1 \text{, }\vert\alpha\vert = i}} x_1^{\alpha_1 - p} x_2^{\alpha_2 - (p-1)} \cdots x_p^{\alpha_p - 1} \right) t^i.
\]
We make the same transformations as in the previous section.
\[
H_p(X, t) = \sum_{i\geq 0} x_1^{-p}x_2^{-(p-1)}\cdots x_{p}^{-1}\left (\sum_{\substack{\alpha = (\alpha_1 > \cdots > \alpha_p>0) \\
\alpha_1 \leq n-1 \text{, }\vert\alpha\vert = i}} x_1^{\alpha_1} x_2^{\alpha_2} \cdots x_p^{\alpha_p} \right) t^i.
\]
Then we set again $u_1 = x_1t$, $u_2 = x_1x_2t^2$, \ldots, $u_p = x_1\cdots x_pt^p$ and obtain
\[
H_p(X, t) = \frac{t^{p(p+1)/2}}{u_1 \cdots u_{p}}\sum_{i\geq 0} \sum_{\substack{\alpha = (\alpha_1 > \cdots > \alpha_p>0) \\
\alpha_1 \leq n-1 \text{, }\vert\alpha\vert = i}} u_1^{\alpha_1 - \alpha_2} u_2^{\alpha_2-\alpha_3} \cdots u_{p-1}^{\alpha_{p-1} - \alpha_p} u_p^{\alpha_p}.
\]

Now we notice that the polynomial $H_p(X,t)$ is the $(n-p-1)$-st partial sum of the power series
\[
H_p^{\mathrm{inf}}(X,t) = \frac{t^{p(p+1)/2}}{u_1 \cdots u_{p}}\sum_{i\geq 0} \sum_{\substack{\alpha = (\alpha_1 > \cdots > \alpha_p>0) \\
\vert\alpha\vert = i}} u_1^{\alpha_1 - \alpha_2} u_2^{\alpha_2-\alpha_3} \cdots u_{p-1}^{\alpha_{p-1} - \alpha_p} u_p^{\alpha_p}.
\]
We derive after some computations that
\[
H_p^{\mathrm{inf}}(X, t) = t^{\frac{p(p+1)}{2}} \prod_{k=1}^{p}\frac{1}{1-u_k}.
\]
Using the change of variables $v_1 = x_1$, $v_2 = x_1x_2$, \ldots, $v_p = x_1\cdots x_p$ we obtain
\[
H_p(X,t)= H_p'(V,t) = \sum_{i\geq 0} \left (\sum_{\substack{\alpha = (\alpha_1 > \cdots > \alpha_p>0) \\
\alpha_1 \leq n-1 \text{, } \vert\alpha\vert = i}}
v_1^{\alpha_1 - \alpha_2-1} v_2^{\alpha_2 - \alpha_3 -1} \cdots v_{p-1}^{\alpha_{p-1} - \alpha_p -1} v_p^{\alpha_p - 1} \right) t^i
\]
and
\[
H_p^{\mathrm{inf}}(X,t)= (H_p')^{\mathrm{inf}}(V,t) = \sum_{i\geq 0} \left (\sum_{\substack{\alpha = (\alpha_1 > \cdots > \alpha_p>0) \\
\vert\alpha\vert = i}}
v_1^{\alpha_1 - \alpha_2-1} v_2^{\alpha_2 - \alpha_3 -1} \cdots v_{p-1}^{\alpha_{p-1} - \alpha_p -1} v_p^{\alpha_p - 1} \right) t^i.
\]
Therefore,
\begin{align*}
(H_p')^{\mathrm{inf}}(V, t) = t^{\frac{p(p+1)}{2}} \prod_{k=1}^{p}\frac{1}{1-v_kt^k}.
\end{align*}

The polynomial $H_p(X,t)$ consists of all terms from $H_p^{\mathrm{inf}}(X,t)$ of the form $t^{\frac{p(p+1)}{2}}u_1^{a_1}\cdots u_p^{a_p}$
such that $a_1 + \cdots + a_p \leq n-p -1$.
Therefore, the polynomial $H_p'(V,t)$ consists of all terms from $(H_p')^{\mathrm{inf}}(V,t)$
of the form $t^{\frac{p(p+1)}{2}}v_1^{a_1} \cdots v_p^{a_p} t^{a_1 + 2a_2 + \cdots + pa_p}$ with $a_1 + \cdots +a_p \leq n-p -1$. In other words,
\begin{equation} \label{eq_Hp_Lambda}
H_p'(V, t) = \sum_{a_1 + \cdots + a_p \leq n-p-1} v_1^{a_1} \cdots v_p^{a_p} t^{ \frac{p(p+1)}{2} + a_1 + 2a_2 + \cdots + pa_p}.
\end{equation}

The following propositions now hold.

\begin{proposition} \label{prop_HilbSeries_Lambda_O}
\begin{align*}
H(W^{\OO(n)}, t) =1  + &\sum_{\substack{ p=2 \\ p  - \text{\rm even}}}^{n-1} t^{\frac{p(p+1)}{2}}
\left (\sum_{\substack{a_1 + \cdots + a_{\frac{p}{2}} \leq n-p-1 \\ a_1, \ldots, a_{\frac{p}{2}} - \text{\rm even}}}
t^{2a_1 + 4a_2 + \ldots + pa_{\frac{p}{2}}} \right ).
\end{align*}
\end{proposition}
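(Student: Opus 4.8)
The plan is to follow the proof of the analogous statement for $\Lambda(S^2V)$ from Section \ref{sec_S} almost verbatim; the only genuinely new ingredient is a description of which of the partitions $\lambda = (\alpha_1-1, \ldots, \alpha_p-1 \vert \alpha_1, \ldots, \alpha_p)$, with $\alpha_1 > \cdots > \alpha_p > 0$, are even. First I would write down the parts of such a $\lambda$ explicitly: the rows meeting the diagonal are $\lambda_i = \alpha_i + (i-1)$ for $i = 1, \ldots, p$, while for $r > p$ one has $\lambda_r = \vert\{c : 1 \le c \le p,\ c + \alpha_c \ge r\}\vert$. Because the $\alpha_c$ are strictly decreasing and positive, $c + \alpha_c \ge \alpha_p + p \ge p+1$ for every $c$, and hence $\lambda_{p+1} = p$. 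This single identity forces $p$ to be even whenever $\lambda$ is even, and it is precisely the point at which the present case diverges from that of $\Lambda(S^2V)$, where the admissibility of $\alpha_p = 0$ shortened the tail and produced an extra family with $p$ odd.

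Granting $p$ even, I would prove that $\lambda$ is even if and only if $\alpha_1 - \alpha_2 = \alpha_3 - \alpha_4 = \cdots = \alpha_{p-1} - \alpha_p = 1$ and $\alpha_1, \alpha_3, \ldots, \alpha_{p-1}$ are all even. For sufficiency, the equalities $\alpha_{2i-1} - \alpha_{2i} = 1$ make the column lengths $c + \alpha_c$ coincide in consecutive pairs, so each value occurs an even number of times and every tail part $\lambda_r$ ($r>p$) is automatically even, while the first $p$ rows pair up as $\lambda_{2i-1} = \lambda_{2i} = \alpha_{2i-1} + 2(i-1)$, which is even exactly when $\alpha_{2i-1}$ is even. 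For necessity, requiring the first $p$ parts to be even already fixes the parities $\alpha_{2i-1}$ even and $\alpha_{2i}$ odd, whence all consecutive differences $\alpha_c - \alpha_{c+1}$ are odd; requiring in addition that each value of $c + \alpha_c$ occur with even multiplicity, which is forced by the evenness of the tail starting from $\lambda_{p+1} = p$, then forces consecutive pairs to coincide, i.e. $\alpha_{2i-1} - \alpha_{2i} = 1$.

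The remaining steps are a direct transcription of Section \ref{sec_S}. I would rewrite the even conditions in terms of the exponents occurring in (\ref{eq_Hp_Lambda}), namely $a_j = \alpha_j - \alpha_{j+1} - 1$ for $j < p$ and $a_p = \alpha_p - 1$: they become $a_1 = a_3 = \cdots = a_{p-1} = 0$ together with $a_2, a_4, \ldots, a_p$ all even. To isolate these terms in $H_p'(V,t)$ I would first substitute $v_1 = v_3 = \cdots = v_{p-1} = 0$, killing every monomial with a positive odd-indexed exponent, and then apply the averaging operator $f \mapsto \frac{1}{2}(f(1, \ldots) + f(-1, \ldots))$ successively to $v_2, v_4, \ldots, v_p$, each pass retaining the even powers and setting the variable to $1$. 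Here all $p/2$ of the surviving exponents are even, so every step is a sum; unlike the $\Lambda(S^2V)$ computation there is no concluding odd-extraction (difference) step, which is exactly why the final inner sum carries the condition that all of $a_1, \ldots, a_{p/2}$ be even.

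Assembling the pieces gives $H(W^{\OO(n)}, t) = 1 + \sum_{p\ \mathrm{even}} M_p^{(p/2)}(t)$, the $1$ coming from $p = 0$ and the odd values of $p$ contributing nothing by the parity obstruction; the upper limit $n-1$ records that $\alpha_1 \le n-1$ and $\alpha_1 \ge p$ force $p \le n-1$, and the constraint $a_1 + \cdots + a_{p/2} \le n - p - 1$ is inherited from (\ref{eq_Hp_Lambda}). The one real obstacle is the combinatorial analysis in the first two paragraphs: correctly reading off the tail of $\lambda$ and extracting the clean identity $\lambda_{p+1} = p$ together with the pairing of the column lengths. Everything downstream of that is the evaluation machinery already developed for $\Lambda(S^2V)$.
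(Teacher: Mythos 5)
Your proposal is correct and follows essentially the same route as the paper: the paper's (sketched) proof likewise rests on the criterion that $\lambda = (\alpha_1-1,\ldots,\alpha_p-1\,\vert\,\alpha_1,\ldots,\alpha_p)$ is even if and only if $p$ is even, $\alpha_1,\alpha_3,\ldots,\alpha_{p-1}$ are even, and $\alpha_1-\alpha_2=\cdots=\alpha_{p-1}-\alpha_p=1$, and then evaluates $H_p'(V,t)$ from (\ref{eq_Hp_Lambda}) at the same well-chosen points (zeros in the odd-indexed variables, $\pm1$-averaging in the even-indexed ones). Your write-up simply supplies details the paper leaves implicit, namely the proof of the evenness criterion via $\lambda_{p+1}=p$ and the tail multiplicities, and the explicit substitution scheme copied from Section \ref{sec_S}.
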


\begin{proof}
A partition $\lambda = (\alpha_1 - 1, \ldots, \alpha_p - 1 \vert \alpha_1, \ldots, \alpha_p)$ is even if and only if
\begin{itemize}
\item $p$ is even;
\item $\alpha_1, \alpha_3, \ldots, \alpha_{p-1}$ are even;
\item $\alpha_1 - \alpha_2 = 1, \alpha_3 - \alpha_4 = 1, \ldots, \alpha_{p-1} - \alpha_p = 1$.
\end{itemize}

Therefore, fixing $p$ even and evaluating the polynomial $H'_p(V,t)$ in (\ref{eq_Hp_Lambda}) at well-chosen points we obtan the desired result.
\end{proof}


Next, we notice that Proposition \ref{prop_HilbSeries_S_Sp} and Proposition \ref{prop_HilbSeries_Lambda_O} imply the following corollary.

\begin{corollary} \label{coro_Equality}
\[
H(\Lambda(S^2V)^{\Sp(2k)}, t) = H(\Lambda(\Lambda^2V)^{\OO(2k+1)}, t).
\]
\end{corollary}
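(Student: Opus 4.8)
The plan is to prove the identity by directly comparing the closed formulas supplied by Proposition \ref{prop_HilbSeries_S_Sp} and Proposition \ref{prop_HilbSeries_Lambda_O}, each specialized to the relevant value of $n$. First I would substitute $n = 2k$ into the formula of Proposition \ref{prop_HilbSeries_S_Sp} to record
\[
H(\Lambda(S^2V)^{\Sp(2k)}, t) = 1 + \sum_{\substack{p=2 \\ p-\text{\rm even}}}^{2k} t^{\frac{p(p+1)}{2}} \left( \sum_{\substack{a_1 + \cdots + a_{p/2} \leq 2k - p \\ a_i-\text{\rm even}}} t^{2a_1 + 4a_2 + \cdots + p a_{p/2}} \right).
\]
Next I would substitute $n = 2k+1$ into the formula of Proposition \ref{prop_HilbSeries_Lambda_O} for $H(\Lambda(\Lambda^2V)^{\OO(n)}, t)$. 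There the outer sum runs over even $p$ from $2$ up to $n-1$, and the inner constraint reads $a_1 + \cdots + a_{p/2} \leq n - p - 1$, while the summand and parity conditions have the same shape.

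The key point is the arithmetic of the two sets of bounds. On the $\OO(2k+1)$ side the outer index runs up to $n - 1 = 2k$, matching the bound $n = 2k$ of the $\Sp(2k)$ side; and the inner constraint becomes $a_1 + \cdots + a_{p/2} \leq (2k+1) - p - 1 = 2k - p$, matching the inner constraint $a_1 + \cdots + a_{p/2} \leq 2k - p$ of the $\Sp(2k)$ side. Since in both expressions the summand is $t^{p(p+1)/2}\, t^{2a_1 + 4a_2 + \cdots + p a_{p/2}}$, the parity condition on each $a_i$ is ``even,'' and the additive constant is $1$, the two power series coincide monomial by monomial.

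Thus the proof reduces to placing the two specialized formulas side by side and observing that every ingredient agrees: the range of $p$, the range of the multi-index $(a_1, \ldots, a_{p/2})$, the parity conditions, and the exponent of $t$. I expect no real obstacle beyond this bookkeeping; the single point deserving care is the verification that $n - p - 1 = 2k - p$ when $n = 2k+1$, so that the off-by-one shift in the upper bound of Proposition \ref{prop_HilbSeries_Lambda_O} (which reflects $\vert\lambda\vert = 2\vert\alpha\vert$ in the $\Lambda^2$ setting, versus $\vert\lambda\vert = 2\vert\alpha\vert + 2p$ in the $S^2$ setting) is exactly compensated by passing from $n = 2k$ to $n = 2k+1$. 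Once this is confirmed, the equality of Hilbert series is immediate.
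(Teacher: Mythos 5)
Your proposal is correct and is exactly the paper's argument: the corollary is stated there as an immediate consequence of Propositions \ref{prop_HilbSeries_S_Sp} and \ref{prop_HilbSeries_Lambda_O}, obtained by setting $n=2k$ in the first and $n=2k+1$ in the second and matching the ranges, parity conditions, and exponents term by term. Your careful check that $(2k+1)-p-1 = 2k-p$ compensates the off-by-one shift in the two upper bounds is precisely the bookkeeping the paper leaves implicit.
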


It remains to consider the algebras $\Lambda(\Lambda^2V)^{\Sp(2k)}$ and $\Lambda(\Lambda^2V)^{\SO(n)}$.
\begin{proposition} \label{prop_Hilb_Lambda_Sp}
Let $n = 2k$. Then
\begin{align*}
H(W^{\Sp(2k)}, t) = 1 + t + &\sum_{\substack{ p=2 \\ p - \text{\rm even}}}^{n-1} t^{\frac{p(p+1)}{2}}
\left (\sum_{\substack{a_1 + \cdots + a_{\frac{p}{2}} \leq n-p -1 \\
a_1, \ldots, a_{\frac{p-2}{2}} - \text{\rm even, } a_{\frac{p}{2}} - \text{\rm odd}}} t^{2a_1 + 4a_2 + \cdots + pa_{\frac{p}{2}}} \right ) + \\
&\sum_{\substack{ p=3 \\ p - \text{\rm odd}}}^{n-1} t^{\frac{p(p+1)}{2}}
\left (\sum_{\substack{a_1 + \cdots + a_{\frac{p-1}{2}} \leq n-p -1 \\
a_1, \ldots, a_{\frac{p-1}{2}} - \text{\rm even}}} t^{2a_1 + 4a_2 + \cdots + (p-1)a_{\frac{p-1}{2}}} \right).
\end{align*}
\end{proposition}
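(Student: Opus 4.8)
The plan is to mirror the proofs of Proposition \ref{prop_HilbSeries_S_Sp} and Proposition \ref{prop_HilbSeries_Lambda_O}, using the auxiliary series $H'_p(V,t)$ from (\ref{eq_Hp_Lambda}) together with the description of $H(W^{\Sp(2k)},t)$ displayed above as a sum over the partitions $\lambda=(\alpha_1-1,\ldots,\alpha_p-1\vert\alpha_1,\ldots,\alpha_p)$, with $\alpha_1\leq n-1$, whose transpose $\lambda'$ is an even partition. First I would translate the condition ``$\lambda'$ is even'' into explicit conditions on the Frobenius coordinates $\alpha=(\alpha_1>\cdots>\alpha_p\geq 1)$, and then fix $p$ and evaluate $H'_p(V,t)$ at well-chosen points, exactly as in the previous section.

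For the key combinatorial step I note that the transpose is $\mu:=\lambda'=(\alpha_1,\ldots,\alpha_p\vert\alpha_1-1,\ldots,\alpha_p-1)$ in Frobenius notation, so its rows through the Durfee square have lengths $\mu_i=\alpha_i+i$ for $i=1,\ldots,p$, while the rows below the Durfee square are governed by the legs $\alpha_i-1$. Requiring every part of $\mu$ to be even forces, on one side, $\alpha_i+i$ to be even for each $i\leq p$ (so $\alpha_i$ has the parity of $i$), and, on the other side, the lower rows to have even length, which is equivalent to the pairing conditions $\alpha_{2j-1}-\alpha_{2j}=1$. Working this out splits into two cases: for $p$ even one obtains $\alpha_{2j-1}-\alpha_{2j}=1$ for $1\leq j\leq p/2$ together with $\alpha_2,\alpha_4,\ldots,\alpha_p$ even; for $p$ odd one obtains in addition the constraint $\alpha_p=1$, which is exactly what the lower rows force when the last diagonal column is left unpaired, along with $\alpha_{2j-1}-\alpha_{2j}=1$ for $1\leq j\leq(p-1)/2$ and $\alpha_2,\alpha_4,\ldots,\alpha_{p-1}$ even.

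Having these conditions, I would extract them from $H'_p(V,t)$ as before. Imposing a difference $\alpha_{2j-1}-\alpha_{2j}=1$ means setting the exponent $\alpha_{2j-1}-\alpha_{2j}-1$ of $v_{2j-1}$ equal to zero, which is achieved by evaluating at $v_{2j-1}=0$; in the odd case one also sets $v_p=0$ to impose $\alpha_p-1=0$. After these substitutions only the variables $v_2,v_4,\ldots$ survive, and the parity conditions on $\alpha_2,\alpha_4,\ldots$ become parity conditions on the exponents of the surviving $v_i$; these are isolated by the averaging operators $\frac12\bigl(f(\ldots,1,\ldots)+f(\ldots,-1,\ldots)\bigr)$ for the even exponents and $\frac12\bigl(f(\ldots,1,\ldots)-f(\ldots,-1,\ldots)\bigr)$ for the single odd one, applied iteratively as in the computation of $H(\Lambda(S^2V)^{\OO(n)},t)$. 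Substituting the closed form (\ref{eq_Hp_Lambda}) and reading off the $t$-powers then yields, for each even $p\geq2$ and odd $p\geq3$, precisely the two inner sums in the statement, the truncation $a_1+\cdots\leq n-p-1$ coming from the bound $\alpha_1\leq n-1$. Finally I would handle $p=0$ and $p=1$ directly: $p=0$ gives the constant $1$, and for $p=1$ the only admissible partition is $\alpha_1=1$ (that is, $\lambda=(1,1)$ with $\lambda'=(2)$), contributing $t$; summing over all $p$ produces the $1+t$ prefix and the asserted formula.

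I expect the main obstacle to be the first, purely combinatorial step: correctly converting ``$\lambda'$ even'' into the Frobenius-coordinate conditions, and in particular recognizing that for odd $p$ the lower rows force the extra constraint $\alpha_p=1$, a phenomenon absent for even $p$, as well as pinning down which $\alpha_i$ must be even and which odd. An error in the parity pattern or in this extra constraint would lead to the wrong specialization points and hence the wrong inner sums. Once the conditions are established, the passage through $H'_p(V,t)$ and the iterated averaging operators is routine and exactly parallels the earlier propositions.
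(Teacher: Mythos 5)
Your proposal is correct and follows essentially the same route as the paper: the conditions you derive on the Frobenius coordinates (pairing $\alpha_{2j-1}-\alpha_{2j}=1$ with $\alpha_2,\alpha_4,\ldots$ even for $p$ even, and additionally $\alpha_p=1$ for $p$ odd) are exactly the paper's characterization of when $\lambda'$ is even, and the subsequent specialization of $H'_p(V,t)$ at $v_{\mathrm{odd}}=0$ (plus $v_p=0$ for odd $p$) followed by the $\pm$-averaging operators is precisely the paper's ``evaluation at well-chosen points.'' In fact your write-up supplies more detail than the paper's own proof, which states the combinatorial conditions without derivation and leaves the evaluation step implicit.
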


\begin{proof}
For a partition $\lambda = (\alpha_1 -1, \ldots,\alpha_p -1 \vert \alpha_1, \ldots, \alpha_p)$
the transpose $\lambda'$ is even if and only if either the following three conditions hold
\begin{itemize}
\item $p$ is even;
\item $\alpha_1, \alpha_3, \ldots, \alpha_{p-1}$ are odd numbers;
\item $\alpha_1 - \alpha_2 = 1, \alpha_3 - \alpha_4 = 1, \ldots, \alpha_{p-1} - \alpha_p = 1$;
\end{itemize}
or
\begin{itemize}
\item $p$ is odd;
\item $\alpha_1, \alpha_3, \ldots, \alpha_{p}$ are odd numbers and $\alpha_p = 1$;
\item $\alpha_1 - \alpha_2 = 1, \alpha_3 - \alpha_4 = 1, \ldots, \alpha_{p-2} - \alpha_{p-1} = 1$.
\end{itemize}
Therefore, by fixing $p$ and evaluating the series $H'_p(V,t)$ from (\ref{eq_Hp_Lambda}) at well-chosen points we obtain the statement of the proposition.
\end{proof}

\begin{corollary}
\[
H(\Lambda(\Lambda^2V)^{\Sp(2k)}, t) = H(\Lambda(S^2V)^{\OO(2k-1)}, t) = H(\Lambda(S^2V)^{\SO(2k-1)}, t).
\]
\end{corollary}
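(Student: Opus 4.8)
The plan is to split the statement into its two equalities and reduce each to a result already proved in Section~\ref{sec_S}. The second equality, $H(\Lambda(S^2V)^{\OO(2k-1)},t)=H(\Lambda(S^2V)^{\SO(2k-1)},t)$, costs essentially nothing: since $2k-1=2(k-1)+1$ is odd, the Corollary of Section~\ref{sec_S} asserting $\Lambda(S^2V)^{\OO(2k+1)}=\Lambda(S^2V)^{\SO(2k+1)}$ applies with $k$ replaced by $k-1$, giving an equality of algebras and \emph{a fortiori} of their Hilbert series. Thus everything reduces to the first equality $H(\Lambda(\Lambda^2V)^{\Sp(2k)},t)=H(\Lambda(S^2V)^{\OO(2k-1)},t)$.

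For the first equality I would simply place the two relevant explicit formulas side by side and compare them monomial by monomial. Proposition~\ref{prop_Hilb_Lambda_Sp}, in which $n=2k$, writes $H(\Lambda(\Lambda^2V)^{\Sp(2k)},t)$ as $1+t$ plus an outer sum over even $p$ with $2\le p\le n-1$ and an outer sum over odd $p$ with $3\le p\le n-1$; in each inner sum the variables satisfy $a_1+\cdots\le n-p-1$, the common prefactor is $t^{p(p+1)/2}$, the monomial exponent is $2a_1+4a_2+\cdots$, and the parity pattern on the $a_i$ is ``all but the last even, the last odd'' for even $p$ and ``all even'' for odd $p$. The first Proposition of Section~\ref{sec_S}, computing $H(\Lambda(S^2V)^{\OO(n)},t)$, has \emph{exactly the same shape}, except that the outer index ranges to $n$ and the inner constraint reads $a_1+\cdots\le n-p$.

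The crux is then a one-line substitution check. Putting $n=2k$ in the first formula turns the upper limit $n-1$ into $2k-1$ and the inner bound $n-p-1$ into $2k-p-1$; putting $n=2k-1$ in the second formula turns the upper limit $n$ into $2k-1$ and the inner bound $n-p$ into $2k-1-p=2k-p-1$. Since the prefactors, the monomial exponents, and the parity conditions are already identical in the two statements, the two series agree term by term, which is exactly the claim. One should note only that the $p=0$ and $p=1$ contributions are in both cases precisely $1$ and $t$, so the explicitly displayed leading terms match and there is no low-degree discrepancy.

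I do not expect any genuine obstacle: all the mathematical content has been absorbed into the cited propositions and into the earlier branching and modification-rule analysis, so what remains is purely the bookkeeping that the shifts $n\mapsto 2k$ and $n\mapsto 2k-1$ carry the two superficially different index sets onto the common one governed by $2k-1-p$. The only point demanding care is to quote the parity requirement on the final inner variable (odd in the even-$p$ blocks) identically from the two sources, so that no phantom term is created or lost under the identification.
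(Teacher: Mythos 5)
Your proof is correct and is essentially the paper's own (implicit) argument: the paper states this corollary without proof, as an immediate term-by-term comparison of Proposition \ref{prop_Hilb_Lambda_Sp} (where $n=2k$, so the outer sums run to $n-1=2k-1$ and the inner bounds are $n-p-1=2k-1-p$) against the Section \ref{sec_S} formulas for $\Lambda(S^2V)^{\OO(n)}$ and $\Lambda(S^2V)^{\SO(n)}$ evaluated at $n=2k-1$, which is exactly your substitution check, including the matching $1+t$ leading terms and identical parity conditions. Your reduction of the second equality to the odd-$n$ coincidence $\Lambda(S^2V)^{\OO(2k+1)}=\Lambda(S^2V)^{\SO(2k+1)}$ from Section \ref{sec_S} is likewise how the paper handles it.
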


\begin{proposition} \label{prop_Hilb_Lambda_SO}
\begin{itemize}
\item [(i)] Let $n=2k+1$. Then
\begin{align*}
 H(W^{\SO(n)}, t) = H(W^{\OO(n)}, t) = 1 + &\sum_{\substack{ p=2 \\ p - \text{\rm even}}}^{n-1} t^{\frac{p(p+1)}{2}}
 \left (\sum_{\substack{a_1 + \cdots + a_{\frac{p}{2}} \leq n-p-1 \\
a_1, \ldots, a_{\frac{p}{2}} - \text{\rm even}}} t^{2a_1 + 4a_2 + \cdots + pa_{\frac{p}{2}}} \right ).
\end{align*}
\item [(ii)] Let $n = 2k$. Then
\begin{align*}
H(W^{\SO(n)}, t) =1 + &\sum_{\substack{ p=2 \\ p - \text{\rm even}}}^{n-1} t^{\frac{p(p+1)}{2}} \left (\sum_{\substack{a_1 + \cdots + a_{\frac{p}{2}} \leq n-p-1 \\
a_1, \ldots, a_{\frac{p}{2}} - \text{\rm even}}} t^{2a_1 + 4a_2 + \cdots + pa_{\frac{p}{2}}} \right ) \\
+&\sum_{\substack{ p=1 \\ p - \text{\rm odd}}}^{n-1} t^{\frac{p(p+1)}{2}} \left(\sum_{\substack{a_1 + \cdots + a_{\frac{p+1}{2}} = n-p-1 \\
a_i - \text{\rm even}}} t^{a_1 + 3a_2 + \cdots + pa_{\frac{p+1}{2}}} \right).
\end{align*}
\end{itemize}

\end{proposition}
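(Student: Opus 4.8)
The plan is to mirror the derivation of $H(W^{\OO(n)},t)$ in Proposition \ref{prop_HilbSeries_Lambda_O} and to isolate the extra contribution distinguishing $\SO(n)$ from $\OO(n)$. By Corollary \ref{coro_SO}, the coefficient of $t^i$ in $H(W^{\SO(n)},t)$ counts those Frobenius-form partitions $\lambda=(\alpha_1-1,\ldots,\alpha_p-1\vert\alpha_1,\ldots,\alpha_p)$ with $\alpha_1\leq n-1$ and $\vert\lambda\vert=2i$ that are \emph{even or odd}. The even ones are precisely those counted by $H(W^{\OO(n)},t)$ (Corollary \ref{coro_O} and Proposition \ref{prop_HilbSeries_Lambda_O}), and the even and odd nonzero partitions are disjoint. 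Hence I would write
\[
H(W^{\SO(n)},t)=H(W^{\OO(n)},t)+R_n(t),
\]
where $R_n(t)$ enumerates (graded by $t^{\vert\lambda\vert/2}$) the $\lambda$ of the above shape all of whose parts are odd. Everything reduces to computing $R_n(t)$.

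First I would pin down which odd $\lambda$ actually occur. By Proposition \ref{prop_Triv} an odd $\lambda$ produces the trivial $\SO(n)$-module only through the single inadmissible module identified there, which has $n$ rows; so $\lambda$ must have exactly $n$ parts, i.e. $\lambda'_1=\alpha_1+1=n$, forcing $\alpha_1=n-1$. Since then $\lambda_1=\alpha_1=n-1$ must itself be odd, this already forces $n$ to be even. This yields part (i) at once: for $n=2k+1$ there are no odd $\lambda$ of the required shape, so $R_n(t)=0$ and $H(W^{\SO(n)},t)=H(W^{\OO(n)},t)$, the latter being given by Proposition \ref{prop_HilbSeries_Lambda_O}.

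For part (ii) I take $n=2k$ and characterize the odd $\lambda$. Using $\lambda'_i=\alpha_i+i$ for $i\leq p$ together with the Durfee-square bound $\lambda'_p=\alpha_p+p\geq p+1>p\geq\lambda'_{p+1}$, the oddness criterion $\lambda'_{2s}=\lambda'_{2s+1}$ (no part of $\lambda$ is even) forbids the pair $(p,p+1)$ from being an equal pair, hence forces $p$ to be \emph{odd}; on the diagonal it becomes $\alpha_2-\alpha_3=\alpha_4-\alpha_5=\cdots=\alpha_{p-1}-\alpha_p=1$ together with $\alpha_1,\alpha_3,\ldots,\alpha_p$ odd, and conversely these conditions render every $\lambda_i$ odd. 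I would then feed this into $H'_p(V,t)$ from (\ref{eq_Hp_Lambda}): setting $v_2=v_4=\cdots=v_{p-1}=0$ imposes the difference relations $\alpha_{2j}-\alpha_{2j+1}=1$, and the iterated averages $\tfrac12\bigl(f(1)+f(-1)\bigr)$ over $v_1,v_3,\ldots,v_p$ impose the parities of $\alpha_1,\alpha_3,\ldots,\alpha_p$. The surviving variables $v_1,v_3,\ldots,v_p$ carry the $t$-weights $1,3,\ldots,p$, while the requirement $\alpha_1=n-1$ (exactly $n$ parts) turns the truncation $\sum a_i\leq n-p-1$ into the equality $\sum a_i=n-p-1$. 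Summing over odd $p$ then gives
\[
R_n(t)=\sum_{\substack{p=1\\ p\ \mathrm{odd}}}^{\,n-1} t^{\frac{p(p+1)}{2}}\Biggl(\sum_{\substack{a_1+\cdots+a_{\frac{p+1}{2}}=n-p-1\\ a_i\ \mathrm{even}}} t^{\,a_1+3a_2+\cdots+pa_{\frac{p+1}{2}}}\Biggr),
\]
which is exactly the extra sum appearing in (ii).

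The step I expect to be the main obstacle is the combinatorial characterization of the odd $\lambda$, specifically the verification that once the diagonal differences and parities are fixed, \emph{all} parts of $\lambda$ below the Durfee square are also forced to be odd. This is precisely where the conjugate identities $\lambda'_i=\alpha_i+i$ and the pairing $\lambda'_{2s}=\lambda'_{2s+1}$ do the work (the below-diagonal equalities $\lambda'_{2s}=\lambda'_{2s+1}$ for $2s>p$ follow because no diagonal $\lambda_i$ is even), entirely parallel to the even case of Proposition \ref{prop_HilbSeries_Lambda_O}. The remaining point-evaluations of $H'_p(V,t)$ are then routine and identical in spirit to the earlier proofs in this section.
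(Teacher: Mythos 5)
Your proposal is correct and takes essentially the same route as the paper: the paper's proof likewise characterizes the odd Frobenius-form partitions (forcing $n$ even, $p$ odd, $\alpha_1=n-1$, the difference conditions $\alpha_2-\alpha_3=\cdots=\alpha_{p-1}-\alpha_p=1$, and the parity conditions on $\alpha_1,\alpha_3,\ldots,\alpha_p$) and then evaluates $H_p'(V,t)$ from (\ref{eq_Hp_Lambda}) at well-chosen points, exactly as you do. Your write-up simply makes explicit what the paper leaves implicit, namely the conjugate-partition verification of the characterization and the observation that requiring $\alpha_1=n-1$ turns the truncation $a_1+\cdots+a_p\leq n-p-1$ into an equality.
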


\begin{proof}
A partition $\lambda = (\alpha_1 -1, \ldots,\alpha_p -1 \vert \alpha_1, \ldots, \alpha_p)$ is odd if and only if the following conditions hold
\begin{itemize}
\item $n$ is even;
\item $p$ is odd;
\item $\alpha_1, \alpha_3, \ldots, \alpha_{p-2}, \alpha_p$ are odd numbers and $\alpha_1 = n-1$;
\item $\alpha_2 - \alpha_3 = 1, \alpha_4 - \alpha_5 = 1, \ldots, \alpha_{p-1} - \alpha_p = 1$.
\end{itemize}

Then, we fix $p$ odd and evaluate the series $H'_p(V,t)$ from (\ref{eq_Hp_Lambda}) at well-chosen points to obtain the result.
\end{proof}

\end{document}